\documentclass[11pt]{article}
\usepackage{amsmath, amssymb, theorem, latexsym, epsfig}
\numberwithin{equation}{section}

\theoremstyle{plain}
\theorembodyfont{\itshape}
\newtheorem{theorem}{Theorem}[section]
\newtheorem{proposition}[theorem]{Proposition}
\newtheorem{lemma}[theorem]{Lemma}
\newtheorem{corollary}[theorem]{Corollary}
                                                                                
\theorembodyfont{\rmfamily}
\newtheorem{definition}[theorem]{Definition}

\newtheorem{example}[theorem]{Example}
\newtheorem{remark}[theorem]{Remark}
\newtheorem{convention}[theorem]{Convention}
\newenvironment{proof}{{\noindent \textbf{Proof}\,\,}}{\hspace*{\fill}$\Box$\medskip}

\title{On odd-periodic orbits in complex planar billiards}
\author{Alexey Glutsyuk
\thanks{Permanent address: CNRS, Unit\'e de Math\'ematiques
Pures et Appliqu\'ees, M.R., \'Ecole Normale Sup\'erieure de Lyon,
46 all\'ee d'Italie, 69364 Lyon 07, France.  \newline Email:
aglutsyu@ens-lyon.fr}
\thanks{Laboratoire
J.-V.Poncelet (UMI 2615 du CNRS et l'Universit\'e Ind\'ependante
de Moscou)}
\thanks{National Research University Higher School of Economics, Russia}
 \thanks{Supported by part by RFBR grants  
10-01-00739-a,  13-01-00969-a, 
 NTsNIL\_a (RFBR-CNRS) grant  10-01-93115 and by ANR grant ANR-08-JCJC-0130-01.}}
\begin{document}
\maketitle
\begin{abstract} The famous conjecture of V.Ya.Ivrii (1978) says that 
{\it in every billiard with infinitely-smooth boundary in a Euclidean space 
the set of periodic orbits has measure zero}. In the present paper we study the 
complex version of Ivrii's conjecture for odd-periodic orbits in planar billiards, with reflections 
from complex analytic curves. We prove  positive 
answer in the following cases: 1) triangular orbits; 2)  odd-periodic orbits in the case, when the mirrors are 
algebraic curves avoiding two special points at infinity, the so-called isotropic points. We provide  immediate applications to 
the real piecewise-algebraic Ivrii's conjecture and to its analogue in the invisibility theory. 
\end{abstract}

\def\cc{\mathbb C}
\def\oc{\overline{\cc}}
\def\cp{\mathbb{CP}}
\def\wt#1{\widetilde#1}
\def\rr{\mathbb R}
\def\var{\varepsilon}
\def\tt{\mathcal T}
\def\mcr{\mathcal R}
\def\a{\alpha}

\section{Introduction}

The famous V.Ya.Ivrii's conjecture \cite{Ivrii} 
 says that {\it in every billiard with infinitely-smooth 
boundary in a Euclidean space of any dimension the set of periodic orbits 
has measure zero.} As it was shown by V.Ya.Ivrii \cite{Ivrii}, it implies the famous 
H.Weyl's conjecture  on the two-term 
asymptotics of the spectrum of Laplacian \cite{HWeyl11}. A brief historical survey of 
both conjectures with references is presented in \cite{gk2}. For triangular orbits Ivrii's conjecture 
was proved in  \cite{bzh, rychlik, stojanov, vorobets, W}. For quadrilateral orbits it was proved in 
\cite{gk1, gk2}. 

\begin{remark} \label{rem-iv} 
Ivrii's conjecture is open 
already for piecewise-analytic billiards, and we believe that this is its principal case. In the latter case 
 Ivrii's conjecture is equivalent to the statement saying that for every 
$k\in\mathbb N$ the set of $k$-periodic orbits has empty interior. In the case, when the boundary is analytic, regular and convex, 
this was proved for arbitrary period in \cite{vas}. 
\end{remark}

 In the present paper we study a complexified version of Ivrii's  
conjecture in complex dimension two for odd periods. More precisely, we consider the complex plane 
$\cc^2$ equipped with the complexified Euclidean metric, which is the standard complex-bilinear 
quadratic form. This defines notion of symmetry with respect to a complex line. 
Reflections of complex lines with respect to complex analytic  curves are defined by 
the same formula, as in the real case. See \cite[subsection 2.1]{alg}  and Subsection 2.2 below for more detail. 

\begin{remark}
Ivrii's conjecture has an analogue in the invisibility theory, see Subsection 1.2 and references therein. It appears that both conjectures have 
the same complexification. Thus, results on the complexified Ivrii's conjecture have applications to both 
Ivrii's conjecture and invisibility. 
\end{remark}

Main results and an application to the real Ivrii's conjecture  are stated in Subsection 1.1. Corollary on the invisibility is stated and proved in Subsection 1.2. 

\subsection{Complex billiards, main results and plan of the paper.} 

\begin{definition} A complex projective line $l\subset\cp^2\supset\cc^2$ is {\it isotropic}, 
if either it  coincides with the infinity line, or the complexified Euclidean quadratic 
form on $\cc^2$ vanishes on $l$. Or equivalently, a line is isotropic, if it passes through some 
of two points with homogeneous coordinates $(1:\pm i:0)$: the  {\it isotropic 
points at infinity}.  In what follows we denote the latter  points by 
$$I_1=(1:i:0), \ \ I_2=(1:-i:0).$$
\end{definition}

\begin{definition} \cite{alg} A planar {\it complex analytic (algebraic) billiard} is a finite collection 
of complex analytic (algebraic) 
curves-``mirrors'' $a_1,\dots,a_k$. We assume that no mirror $a_j$ is an isotropic line and set $a_0=a_k$, $a_{k+1}=a_1$.  
\end{definition}


\begin{definition} \label{deforb} \cite{alg} 
A {\it $k$-periodic billiard orbit} is a collection of points $A_j\in a_j$, $A_{k+1}=A_1$, $A_k=A_0$, 
such that for every $j=1,\dots,k$ one has 
$A_j\neq A_{j+1}$, the tangent line $T_{A_j}a_j$ is not isotropic and 
the  complex lines $A_{j-1}A_j$ and $A_jA_{j+1}$ 
are transverse to it  and  symmetric  with respect to it. (Properly saying, we have to take points $A_j$ together with 
prescribed branches of curves $a_j$ at $A_j$: this specifies the  line $T_{A_j}a_j$ in unique way, if $A_j$ is a self-intersection point 
of the curve $a_j$.) 
 \end{definition}

\begin{remark} In a real billiard the reflection of a ray from the boundary 
is uniquely defined: the reflection is made at the first point where the ray meets the 
boundary. In the complex case,  the reflection of lines with respect to a 
complex analytic curve is a multivalued mapping 
{\it (correspondence)} of the space of lines in $\cp^2$: we do not have a canonical choice of intersection point of a line with the curve. Moreover, the notion of interior domain 
does not exist in the complex case, since the mirrors have real codimension two. 
\end{remark}

\begin{definition} \cite{alg} A complex analytic billiard $a_1,\dots,a_k$ is {\it $k$-reflective,} if 
it has an open set of periodic orbits. In more detail this means that there exists 
an open set of pairs $(A_1,A_2)\in a_1\times a_2$ extendable to $k$-periodic 
orbits $A_1\dots A_k$. (Then the latter property automatically holds for every other 
pair of neighbor mirrors $a_j$, $a_{j+1}$.) 
\end{definition}

{\bf Problem (Complexified  version of Ivrii's conjecture) \cite{alg}.}  
{\it Classify all the  $k$-reflective complex analytic (algebraic)  billiards.}
\medskip

It is known that there exist 4-reflective complex planar algebraic billiards, see  \cite[p.59, corollary 4.6]{tab}  and \cite{alg}. 
Their complete classification is given  in \cite{alg}. This implies existence of $k$-reflective algebraic billiards for all $k\equiv0(mod 4)$, 
see \cite[remark 1.5]{alg}. 

\medskip

{\bf Conjecture.}  There are no $k$-reflective complex analytic (algebraic) planar billiards for odd $k$.

\medskip

The next two theorems partially confirm this conjecture.

\begin{theorem} \label{three} Every  planar complex analytic billiard with three mirrors 
is not 3-reflective. 
\end{theorem}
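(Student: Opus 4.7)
The plan is to algebraize the reflection law by using the isotropic points $I_1,I_2$. Choose the affine coordinate $p$ on the line at infinity $\ell_\infty\cong\cp^1$ so that $I_1\mapsto 0$ and $I_2\mapsto\infty$. The unique involution of $\ell_\infty$ that swaps $I_1,I_2$ and fixes a direction $\tau$ (together with its complex-Euclidean perpendicular $-\tau$) is $p\mapsto\tau^2/p$; hence reflection at a point $A_j\in a_j$ with tangent direction $\tau_j=\tau_j(A_j)$ sends an incoming line of direction $p$ to the outgoing line of direction $\tau_j^2/p$. For a 3-periodic orbit $A_1A_2A_3$, writing $p_{ij}$ for the direction of $A_iA_j$, the three reflection conditions at the vertices become
\begin{equation*}
p_{31}\,p_{12}=\tau_1^2,\qquad p_{12}\,p_{23}=\tau_2^2,\qquad p_{23}\,p_{31}=\tau_3^2,
\end{equation*}
which are jointly equivalent to the three identities $p_{ij}=\varepsilon\,\tau_i\tau_j/\tau_k$ for a common sign $\varepsilon\in\{\pm 1\}$ and $\{i,j,k\}=\{1,2,3\}$.

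Assume for contradiction that the billiard is 3-reflective. I would use the middle equation (reflection at $A_2$) to define $A_3$ as an analytic function of $(A_1,A_2)$ by intersecting the line through $A_2$ of direction $\tau_2^2/p_{12}$ with the mirror $a_3$. The two remaining reflection conditions then become analytic identities on an open subset of $a_1\times a_2$; the one coming from reflection at $A_3$ reads
\begin{equation*}
p_{12}(A_1,A_2)\,\tau_3\bigl(A_3(A_1,A_2)\bigr)=\varepsilon\,\tau_1(A_1)\,\tau_2(A_2),
\end{equation*}
and the one coming from reflection at $A_1$ is the analogous statement with the roles of the vertices cycled. Differentiating these identities in local parameters $s_1\in a_1$, $s_2\in a_2$ and using that each $\tau_j$ is a function of only $s_j$ yields an overdetermined analytic system relating the three tangent-direction functions and the implicit map $A_3(A_1,A_2)$.

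The main obstacle is to extract a clean contradiction from this system. A natural route is via analytic continuation of the orbit family. From the product identity $p_{12}p_{23}p_{31}=\varepsilon\tau_1\tau_2\tau_3$, continuing $(A_1,A_2)$ along a suitable arc so that the line $A_1A_2$ tends to an isotropic line (i.e.\ $p_{12}\to 0$ or $\infty$) forces one of the tangent directions $\tau_j$ to approach $0$ or $\infty$ as well, meaning the tangent to the corresponding mirror becomes isotropic at a limit orbit point. Since all orbits in a 3-reflective family have non-isotropic tangents by Definition \ref{deforb}, the contradiction follows, provided one can show that this isotropic limit is genuinely attained in the closure of the orbit family and not pushed off to the boundary of the parameter set. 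A short direct verification handles the exceptional case in which some mirror $a_j$ is a straight line (so $\tau_j$ is constant), showing that the reduced algebraic system cannot sustain a two-parameter family of orbits in that case either.
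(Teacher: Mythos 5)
Your algebraization of the reflection law is correct (the involution $p\mapsto\tau^2/p$ on directions, the identities $p_{ij}=\varepsilon\,\tau_i\tau_j/\tau_k$), and your guiding idea — degenerate the orbit family so that one edge becomes isotropic — is indeed the strategy of the paper. But the proposal has two genuine gaps, and the second is not merely a missing verification but a wrong step. First, the proviso you state at the end ("provided one can show that this isotropic limit is genuinely attained in the closure of the orbit family and not pushed off to the boundary") is exactly the hard part of the theorem, and nothing in your argument addresses it. The mirrors are arbitrary analytic curves, so their (maximal) normalizations are in general non-compact Riemann surfaces; a sequence of orbits with $p_{12}\to 0$ can simply diverge, e.g.\ if the extension of the third mirror does not pass through $I_2$. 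The paper closes this gap with real machinery: the correspondence $(A,B)\mapsto(B,C)$ extends to a bimeromorphic isomorphism $\ha\times\hb\to\hb\times\hc$ (Proposition \ref{bim}); this forces each normalization to be parabolic or rational (Corollary \ref{parab}); then Picard's theorem applied to the projections from $I_1$, together with the distinctness of the mirrors (Proposition \ref{is-edge}) and Corollary \ref{epi}, produces a genuinely non-empty one-parameter family $\Gamma$ of limit configurations with isotropic edge $A'B'$.

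Second, even granting the limit exists, your contradiction fails. You argue that an isotropic tangent direction in the limit contradicts Definition \ref{deforb}; but Definition \ref{deforb} constrains genuine periodic orbits, not points of the closure of the orbit set. Limit points satisfy only the degenerate reflection law of Proposition \ref{comp-set}, which explicitly permits an isotropic tangent line (requiring then that one of the two adjacent edges coincide with it). And the limit here is unavoidably degenerate: when $p_{12}\to 0$, the two remaining edges become isotropic through $I_2$, so the third vertex tends to the isotropic point $I_2$ \emph{at infinity}, where every line — in particular the tangent line to $c$ — is automatically isotropic. So the configuration you reach carries no contradiction by itself. The paper's actual contradiction is finer: along the one-parameter family $\Gamma$ the third vertex is frozen at $I_2$, hence its tangent line $T_{I_2}c$ is a single fixed line, while the degenerate reflection law forces one of the \emph{varying} lines $A'I_2$, $B'I_2$ to coincide with it identically along a component of $\Gamma$; that would force $a$ or $b$ to contain an arc of an isotropic line, which is excluded by the definition of a billiard. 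Without this last step (or some substitute), your limiting argument establishes nothing.
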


\begin{theorem} \label{odd} Let a planar complex algebraic billiard have odd number $k$ of mirrors, and let 
each mirror contain no isotropic point at infinity. Then the billiard is not $k$-reflective.
\end{theorem}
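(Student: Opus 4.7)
Plan: The plan is a proof by contradiction. Assume the billiard is $k$-reflective, so the set $\mathcal{V}\subset a_1\times\cdots\times a_k$ of $k$-periodic orbits is a complex algebraic subvariety of dimension~$2$. The strategy is to track orbits via the dynamics they induce on the projective line at infinity $\cp^1_\infty\subset\cp^2$ and combine algebraicity of the mirrors, the oddness of~$k$, and the absence of $I_1,I_2$ on the mirrors to produce a dimensional contradiction.

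First I would set up the direction dynamics. At each vertex $A_j$ of an orbit the tangent $T_{A_j}a_j$ is non-isotropic, so reflection about it induces an involution $\iota_j\in\mathrm{PGL}_2(\cc)$ of $\cp^1_\infty$ that swaps the isotropic points $I_1,I_2$ and fixes the tangent direction $[T_{A_j}a_j]$. The edge directions $d_j=[A_{j-1}A_j]$ satisfy $d_{j+1}=\iota_j(d_j)$, so closure of a $k$-periodic orbit is equivalent to the composition $\iota_k\iota_{k-1}\cdots\iota_1$ fixing~$d_1$. The key parity observation is that a product of an even number of such involutions lies in the one-complex-dimensional subgroup $G\subset\mathrm{PGL}_2(\cc)$ of Möbius transformations pointwise preserving $\{I_1,I_2\}$, whereas for odd $k$ the total product swaps $I_1$ and $I_2$.

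The central step is to take the limit $A_1\to P$ for $P\in a_1\cap\cp^1_\infty$; by hypothesis $P\neq I_1,I_2$. The projection $\pi_1:\mathcal{V}\to a_1$ has one-dimensional generic fiber, so by upper semi-continuity of fiber dimension the fiber over~$P$ is at least one-dimensional. In a limit orbit with $A_1=P$ and with $A_2,A_k$ remaining finite, both edges $A_kA_1$ and $A_1A_2$ meet $\cp^1_\infty$ at~$P$, so $d_1=d_2=P$; moreover $[T_Pa_1]=P$, hence $\iota_1$ fixes~$P$ automatically. The closure relation then collapses to $\iota_k\iota_{k-1}\cdots\iota_2(P)=P$. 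Being a product of $k-1$ (an even number of) involutions each swapping $\{I_1,I_2\}$, this map lies in $G$ and already fixes $I_1,I_2$; the additional fixed point~$P$, a third distinct point, forces the map to be the identity of $G$. Hence every such limit orbit lies in the subvariety
\[
\mathcal{W}=\{\text{orbits in }\mathcal{V} : \iota_k\iota_{k-1}\cdots\iota_2=\mathrm{id}\}\subset\mathcal{V}.
\]

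The variety $\mathcal{W}$ is cut out inside $\mathcal{V}$ by the single complex equation $\iota_k\iota_{k-1}\cdots\iota_2=\mathrm{id}$ in $G\cong\cc^*$, so $\dim\mathcal{W}\le 1$. By the previous step $\mathcal{W}$ contains an at-least-one-dimensional fiber of $\pi_1$ over each non-isotropic $P\in a_1\cap\cp^1_\infty$, and analogous statements hold for the projections $\pi_i:\mathcal{V}\to a_i$ onto the other mirrors. Unless $\mathcal{W}=\mathcal{V}$, these one-dimensional fibers live in distinct slices of $\mathcal{V}$ and violate $\dim\mathcal{W}\le 1$. In the residual case $\mathcal{W}=\mathcal{V}$ the identity $\iota_2\iota_3\cdots\iota_k=\mathrm{id}$ and its cyclic analogues hold on every orbit, which, using the odd $k$, over-determines the tangent directions along the distinct algebraic mirrors and yields a contradiction. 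The main obstacle is precisely these last two points: controlling limit orbits in which further vertices also escape to infinity, and excluding the degenerate case $\mathcal{W}=\mathcal{V}$; both should be handled by iterating the infinity-limit construction across the different mirrors and exploiting the odd parity of~$k$.
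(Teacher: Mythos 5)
Your direction-dynamics setup is sound and its key ingredient is exactly the paper's Proposition \ref{is-refl}: each reflection about a non-isotropic tangent induces an involution of the line at infinity swapping $I_1,I_2$. But your degeneration is different from the paper's, and the difference is fatal as executed. The paper keeps all vertices finite and degenerates an \emph{edge} to an isotropic line (Proposition \ref{pisotrop}); once such an orbit exists, the parity contradiction is immediate, since the edge directions must then alternate between $I_1$ and $I_2$ and cannot close up after an odd number of steps (Corollary \ref{is-even}). You instead send a \emph{vertex} to a non-isotropic infinity point $P\in a_1\cap\oc_\infty$, and what you obtain is not a contradiction but only the identity $\iota_k\cdots\iota_2=\mathrm{id}$ — a codimension-one condition that still has to be converted into a contradiction by a separate global argument. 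That conversion is where your proof fails: the dichotomy at the end is invalid on both horns. If $\mathcal{W}\neq\mathcal{V}$, there is no violation of $\dim\mathcal{W}\le 1$: a one-dimensional variety can perfectly well contain the finitely many one-dimensional fibers over the (finitely many) infinity points of $a_1$ and of the other mirrors, so nothing is contradicted. And in the case $\mathcal{W}=\mathcal{V}$ you offer no argument at all, only the assertion that the identity ``over-determines the tangent directions.'' So the proof proves no contradiction in either case.

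The second gap, which you acknowledge but do not close, is the degeneration control, and it is the actual technical heart of any proof along these lines. The fiber of the completed periodic set (Proposition \ref{comp-set}) over $P$ is indeed at least one-dimensional, but its points are only \emph{limits} of periodic orbits: other vertices may simultaneously escape to infinity, consecutive vertices may collide ($A_j'=A_{j+1}'$), and tangent lines may become isotropic; at such points Proposition \ref{comp-set} gives only a degenerate form of the reflection law, and the relation $d_{j+1}=\iota_j(d_j)$ you need at every $j=2,\dots,k$ simply does not hold. The paper's proof of Proposition \ref{pisotrop} exists precisely to rule out these degeneracies: it is an induction over the vertices, using Corollary \ref{ccoinc} to control collisions and using the hypothesis $I_1,I_2\notin a_j$ at each inductive step (a locally constant projection would force the next vertex to be $I_2\in a_{j+1}$, contradicting that hypothesis). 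You would need a comparable induction for your vertex-at-infinity degeneration. As written, your proposal establishes a correct and interesting local statement (the holonomy identity at a vertex at infinity) but leaves both the degeneration control and the final contradiction open, so it does not prove the theorem.
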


Theorem \ref{three} is the complexification of the above-mentioned  results by M.Rychlik et al  on triangular orbits in real billiards, 
see \cite{bzh, rychlik, stojanov, vorobets, W}. 
Theorem \ref{odd} has immediate application to the real Ivrii's conjecture.  

\begin{corollary} \label{cor-iv} Consider a real planar  billiard with piecewise-algebraic boundary. Let the complexifications of its 
algebraic pieces contain no isotropic point at infinity.  Then the set of its odd-periodic orbits has measure zero.
\end{corollary}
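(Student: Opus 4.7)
The plan is to reduce the corollary to Theorem \ref{odd} by complexification. Let the real billiard have algebraic boundary arcs $\gamma_1,\dots,\gamma_N$, fix an odd integer $k$, and assume toward contradiction that the set of $k$-periodic orbits has positive Lebesgue measure. Every such orbit has an itinerary $(i_1,\dots,i_k)\in\{1,\dots,N\}^k$ recording the arcs visited by its consecutive vertices; since only finitely many itineraries occur, one of them, say $\mathbf{i}=(i_1,\dots,i_k)$, is realised by a subset of positive measure. Parameterising these orbits by the first two vertices $(A_1,A_2)\in\gamma_{i_1}\times\gamma_{i_2}$ yields a set $S$ of positive two-dimensional Lebesgue measure in this real analytic surface.

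At any smooth real point $A\in\gamma_j$ the tangent line is real, hence non-isotropic, so the reflection law is a local real analytic operation on the space of lines through $A$. Fix a Lebesgue density point $(A_1^*,A_2^*)\in S$ and let $A_1^*,\dots,A_k^*$ be the corresponding periodic orbit. In a real neighbourhood of $(A_1^*,A_2^*)$ iterate the reflection law, at each step selecting, via the implicit function theorem, the real analytic branch of intersection with the next arc that continues the prescribed orbit. One obtains a real analytic map $F$ whose value at $(A_1,A_2)$ is the pair $(A_{k+1},A_{k+2})$; periodicity is the real analytic equation $F(A_1,A_2)=(A_1,A_2)$. Since this equation is satisfied on a set of positive measure in arbitrarily small neighbourhoods of $(A_1^*,A_2^*)$, and since the zero locus of a non-vanishing real analytic map has measure zero, it must hold identically on a non-empty open set $U\subset\gamma_{i_1}\times\gamma_{i_2}$.

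Let $a_j\subset\cp^2$ denote the Zariski closure of the complexification of $\gamma_{i_j}$; by hypothesis no $a_j$ contains $I_1$ or $I_2$. The reflection formula is rational in the coordinates of the reflecting point and of the incoming line, and is regular whenever the tangent is non-isotropic; this non-degeneracy persists in a complex neighbourhood of any real smooth point. Hence $F$ extends holomorphically to a complex neighbourhood of $U$ in $a_{i_1}\times a_{i_2}$, the successive intersections being the holomorphic branches specialising to the real ones fixed above. The equation $F(A_1,A_2)=(A_1,A_2)$ extends, by the identity principle for holomorphic maps, from $U$ to this complex neighbourhood, exhibiting an open set of complex $k$-periodic orbits of the complex algebraic billiard $(a_{i_1},\dots,a_{i_k})$. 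Thus this billiard is $k$-reflective, in contradiction with Theorem \ref{odd}, since $k$ is odd and no mirror contains an isotropic point at infinity.

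The main obstacle is the transition in the middle step: one needs a single-valued real analytic branch of the iterated reflection on a neighbourhood of the density point in order to phrase periodicity as one real analytic equation, and then to extend this equation holomorphically. Both are handled by the non-isotropy of real tangents to real curves (which is automatic) together with the implicit function theorem at each successive intersection; the passage positive measure $\to$ real open set $\to$ complex open set is then a direct application of the real and complex identity principles.
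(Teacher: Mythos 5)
Your proposal is correct and follows essentially the same route as the paper: the paper's one-line proof invokes Remark \ref{rem-iv} (for piecewise-analytic billiards, positive measure of $k$-periodic orbits is equivalent to the existence of an open set of them) together with Theorem \ref{odd}, and your argument is exactly this reduction spelled out in detail --- the density-point/real-analyticity step realizes the content of Remark \ref{rem-iv}, and the holomorphic extension step produces the open set of complex orbits making the complexified billiard $k$-reflective, contradicting Theorem \ref{odd}.
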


The corollary follows immediately from Theorem \ref{odd} and Remark \ref{rem-iv}. 

Theorem \ref{odd} is proved in Section 3. Theorem \ref{three} is proved in Section 4. 
Their proofs are based on the  following elementary fact.

\begin{proposition} \label{is-refl} The symmetry with respect to a non-isotropic line permutes the isotropic directions: the 
image of an  isotropic line through the isotropic point $I_1$ at infinity 
 passes through the other isotropic point $I_2$.
\end{proposition}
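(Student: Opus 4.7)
The approach is to work with the standard complex-bilinear form $\langle v,w\rangle = v_1 w_1 + v_2 w_2$ on $\cc^2$ and to use the fact that a reflection across a non-isotropic line $\ell$ is a linear isometry of this form: after translating so that $\ell$ passes through the origin, it fixes the direction of $\ell$ pointwise and negates the direction of $\ell^\perp$.

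First I would translate the statement into vectorial language. A complex projective line through $I_1=(1:i:0)$ is precisely an affine line in $\cc^2$ whose direction is a nonzero multiple of $(1,i)$; similarly, lines through $I_2$ have direction $(1,-i)$. These two vectors span the two isotropic lines through the origin of $\cc^2$, and are the only nonzero isotropic vectors up to scalar, since $v_1^2+v_2^2=(v_1+iv_2)(v_1-iv_2)$ vanishes only on the two factors. Thus the proposition reduces to showing that the linear part of the reflection $\sigma_\ell$ sends the direction $(1,i)$ to a multiple of $(1,-i)$.

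Since $\sigma_\ell$ preserves $\langle\cdot,\cdot\rangle$, it preserves the set of isotropic directions. Hence $\sigma_\ell$ either fixes each of the two isotropic directions as a set, or swaps them; I need to rule out the first option. The key observation is that the only $\sigma_\ell$-invariant one-dimensional subspaces of $\cc^2$ are $\ell$ (the $+1$-eigenspace) and $\ell^\perp$ (the $-1$-eigenspace). Now $\ell$ is non-isotropic by hypothesis, and $\ell^\perp$ is also non-isotropic: if it were isotropic, it would be contained in its own orthogonal $(\ell^\perp)^\perp=\ell$, forcing $\ell^\perp=\ell$ and contradicting the non-isotropy of $\ell$. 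Therefore no isotropic direction can be $\sigma_\ell$-invariant, so the two isotropic directions must be swapped, giving $\sigma_\ell(I_1)=I_2$.

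The argument is short and essentially linear-algebraic; the only point that requires a moment of care is verifying that $\ell^\perp$ inherits non-isotropy from $\ell$, which I expect to be the main (minor) obstacle.
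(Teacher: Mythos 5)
Your proof is correct, and every step checks out: the isotropic directions are exactly the spans of $(1,\pm i)$ by the factorization $v_1^2+v_2^2=(v_1+iv_2)(v_1-iv_2)$; the linear part of the symmetry is an isometric involution with eigenvalues $+1$ on the direction of $\ell$ and $-1$ on $\ell^\perp$, so its only invariant directions are these two eigenlines; and your argument that $\ell^\perp$ is non-isotropic (an isotropic line in the plane is its own orthogonal complement, so $\ell^\perp$ isotropic would force $\ell^\perp=\ell$, contradicting non-isotropy of $\ell$) is exactly the right way to exclude the possibility that the involution fixes each isotropic direction. Note, however, that the paper itself does not prove this proposition at all: it simply cites a proposition from the companion paper \cite[subsection 2.1]{alg}, so your write-up supplies a self-contained argument where the paper defers to an external reference. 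A marginally slicker variant of the same idea is to pass to the isotropic coordinates $u=z_1+iz_2$, $w=z_1-iz_2$, in which the form becomes the product $uw$; a linear isometry then either preserves both coordinate axes (diagonal) or swaps them (anti-diagonal), and a nontrivial involution fixing a non-isotropic line pointwise has eigenvalues $+1,-1$, hence cannot be diagonal in this basis (diagonal isometric involutions are only $\pm\mathrm{id}$), so it must swap the axes. One last pedantic point you may add in passing: if the isotropic line through $I_1$ is the infinity line itself, the statement is trivial, since the projective extension of an affine map fixes the infinity line, which contains $I_2$; your argument covers all the affine lines with direction $(1,i)$, which is the substantive case.
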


Proposition \ref{is-refl} follows from a proposition at the beginning of  \cite[subsection 2.1]{alg}. 

\begin{corollary} \label{is-even} 
Let a periodic orbit in complex planar analytic billiard have finite vertices, 
and at least one of its edges  (complex lines through neighbor vertices) be isotropic.  
Then all the edges are isotropic, and their 
directions (corresponding isotropic points at infinity) are intermittent, see Fig.1. In particular, the period is even.
\end{corollary}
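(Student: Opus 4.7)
The approach is a one-step-at-a-time propagation of isotropy around the closed orbit, driven by Proposition~\ref{is-refl}. First I would fix notation: let $A_1,\dots,A_k$, $A_{k+1}=A_1$, be the periodic orbit, and suppose without loss of generality that the given isotropic edge is $A_1A_2$ and that it passes through $I_1$ (the case of $I_2$ is symmetric under the obvious relabeling).

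Next I would propagate isotropy by induction on the vertex index. By Definition~\ref{deforb}, at each vertex $A_j$ the tangent line $T_{A_j}a_j$ is non-isotropic, and the edges $A_{j-1}A_j$ and $A_jA_{j+1}$ are transverse to $T_{A_j}a_j$ and symmetric with respect to it. In particular, $A_jA_{j+1}$ is exactly the reflection of $A_{j-1}A_j$ in $T_{A_j}a_j$. Hence, assuming inductively that $A_{j-1}A_j$ passes through one of the isotropic points, Proposition~\ref{is-refl} immediately gives that $A_jA_{j+1}$ is the isotropic line through the \emph{other} isotropic point. Starting from $A_1A_2\ni I_1$, this yields, by induction on $j=2,3,\dots,k$, that every edge is isotropic and that the sequence of isotropic directions strictly alternates:
\[
A_1A_2\ni I_1,\quad A_2A_3\ni I_2,\quad A_3A_4\ni I_1,\ \ \dots
\]

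Finally, for the parity statement, I would close up the orbit: denote by $d_j\in\{I_1,I_2\}$ the isotropic point on edge $A_jA_{j+1}$. The induction gives $d_{j+1}\ne d_j$ for all $j$, interpreted cyclically modulo $k$. An alternating cyclic sequence of length $k$ in a two-element set forces $k$ to be even, completing the proof.

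The only (minor) obstacle is a bookkeeping point: one must check that Proposition~\ref{is-refl} genuinely applies at every vertex, i.e. that the non-isotropy of $T_{A_j}a_j$ and the transversality of the incoming edge are valid hypotheses for each reflection. Both are built into Definition~\ref{deforb} and require no separate argument, but it is worth stating them explicitly so the inductive step is free of degenerate configurations (such as an edge accidentally coinciding with the tangent or with the infinity line).
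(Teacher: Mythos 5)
Your proof is correct and follows exactly the route the paper intends: Corollary \ref{is-even} is stated there as an immediate consequence of Proposition \ref{is-refl}, and your induction around the closed orbit (reflection in each non-isotropic tangent line swaps $I_1$ and $I_2$, so the directions alternate cyclically, forcing even period) is precisely that derivation made explicit. The one point worth stating more explicitly is where the hypothesis of \emph{finite} vertices enters: it guarantees that no edge is the infinity line, hence each isotropic edge contains exactly one of $I_1$, $I_2$, so that the ``direction'' of an edge is well defined and the strict-alternation and parity arguments make sense.
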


\begin{figure}[ht]
  \begin{center}
   \epsfig{file=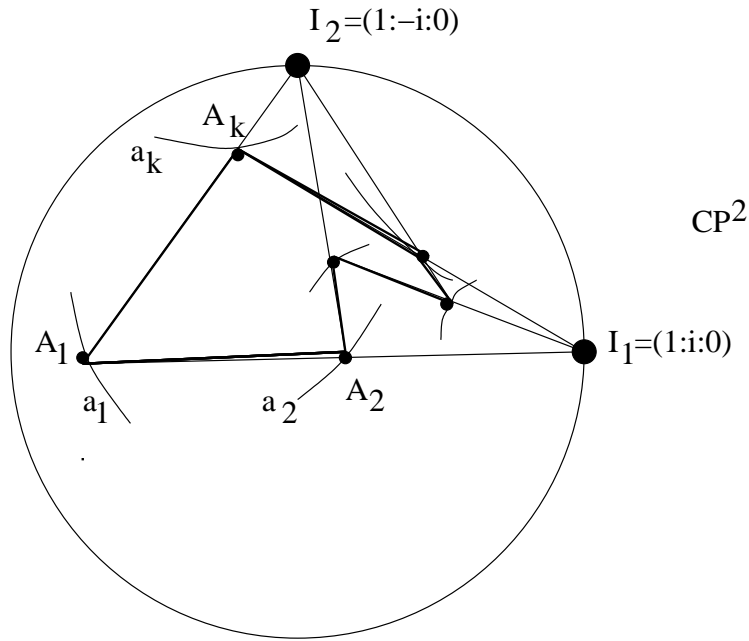}
    \caption{A periodic  orbit with isotropic edges of intermittent directions}
    \label{fig:1}
  \end{center}
\end{figure}

We prove Theorem \ref{odd} by contradiction. Supposing the contrary, i.e., the existence of 
an open set of odd-periodic orbits,  we show that it contains a finite orbit with an isotropic edge, as in the latter corollary. 
This is the main technical part of the proof, and this is the place we use the second technical assumption of Theorem \ref{odd}. 
This together with Corollary \ref{is-even} implies that the period should be even, -- a contradiction.

For  the proof of Theorem \ref{three}, supposing the contrary, 
we prove the existence of a one-dimensional family of orbits with one isotropic edge through two  
variable vertices so that the third vertex is a fixed isotropic point at infinity. 
We show that the existence of the latter family contradicts the reflection law at the 
third vertex. In the proof we deal with the maximal analytic extensions of mirrors and 
the closure of the open set of periodic orbits in the product of the extended mirrors. The corresponding background material 
and basic facts about complex reflection law are contained 
in  Subsections 2.1 and 2.2 respectively and  in  \cite[subsection 2.1]{alg}. 

\subsection{Corollaries for  the invisibility}
\def\ha{\hat a}
\def\hb{\hat b}
\def\hc{\hat c}

This subsection is devoted to Plakhov's Invisibility Conjecture: the analogue of Ivrii's conjecture in the invisibility theory 
\cite[conjecture 8.2]{pl}. We recall it below and show that it follows from a conjecture saying that no finite collection of germs of 
smooth curves can form a $k$-reflective billiard for appropriate ``invisibility'' reflection  law. In the case, when the curves are analytic, 
the invisibility reflection law is a real form of complex reflection law. This shows that both invisibility and Ivrii's conjectures have the same 
complexification. For simplicity we present this relation in dimension two.  We state and prove 
Corollaries \ref{invis1} and \ref{invis2} of our complex results (Theorems \ref{three} and \ref{odd}) for planar Invisibility Conjecture.

\begin{definition}
Consider an arbitrary perfectly reflecting (may be disconnected) closed bounded body $B$ in a Euclidean space.  For every oriented line $R$ 
take its first intersection point $A_1$ with the boundary $\partial B$ and reflect $R$ from the tangent hyperplane $T_{A_1}\partial B$. The 
reflected ray goes from the point $A_1$ and defines a new oriented line. Then we repeat this procedure. Let us assume that after a finite 
number of reflections the output oriented line coincides with the input line $R$ and will not hit the body any more. 
Then we say that the body $B$ {\it is invisible in the direction 
$R$,} see Fig.2. We call $R$ the {\it invisibility direction,} and the finite piecewise-linear curve  bounded by the first 
and last reflection points will be called its  {\it complete trajectory}.  
 \end{definition}

 \begin{figure}[ht]
  \begin{center}
   \epsfig{file=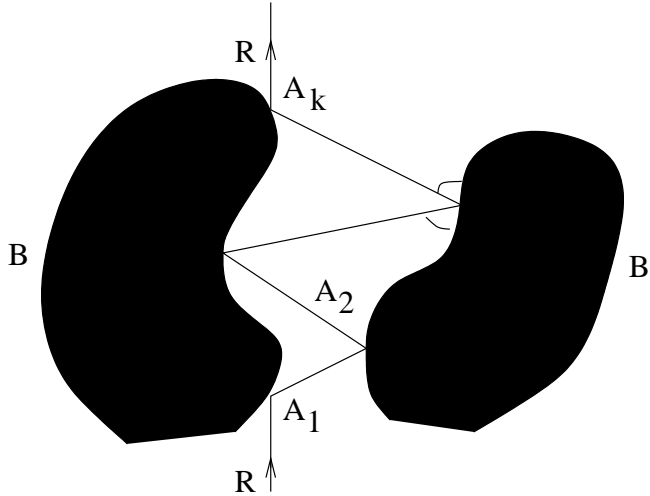}
    \caption{A body invisible in one direction.}
    \label{fig:2}
  \end{center}
\end{figure}

{\bf Invisibility Conjecture} (A.Plakhov, \cite[conjecture 8.2, p.274]{pl}.) 
{\it There is no body with piecewise $C^{\infty}$ boundary for which the set of invisibility directions has positive measure.}

\begin{remark} As is shown by A.Plakhov in his book \cite[section 8]{pl}, there exist no body invisible in all directions. The same book 
contains a very nice survey on invisibility, including examples of bodies invisible in a finite number of (one-dimensional families of) 
directions. See also papers \cite{ply,pl2,pl3} for more results. 
The Invisibility Conjecture is open even in dimension 2.  It is equivalent to the statement saying that there are no 
$k$-reflective bodies for every $k$, see the next definition. 
\end{remark}

\begin{definition} A body $B$  with piecewise-smooth boundary is called {\it $k$-reflective,} if the set of invisibility directions 
with $k$ reflections has positive measure. 
\end{definition}

\begin{definition} Let  $a_1,\dots,a_k$ be a collection of (germs of) planar smooth curves. A $k$-gon $A_1\dots A_k$ with 
$A_j\in a_j$, $A_{k+1}=A_1$, $A_{0}=A_k$ is said to be a {\it  $k$-invisible orbit},   if 

- $A_j\neq A_{j+1}$ for every $j=1,\dots,k$;

-  the tangent line $T_{A_j}a_j$ is the exterior bisector of the angle $\angle A_{j-1}A_jA_{j+1}$ whenever $j\neq1,k$, and it is 
its interior bisector for $j=1,k$, see Fig.3. 

We say that the collection $a_1,\dots,a_k$ is a {\it k-invisible billiard}, if the set of its $k$-invisible orbits has positive measure. 
\end{definition}

 \begin{figure}[ht]
  \begin{center}
   \epsfig{file=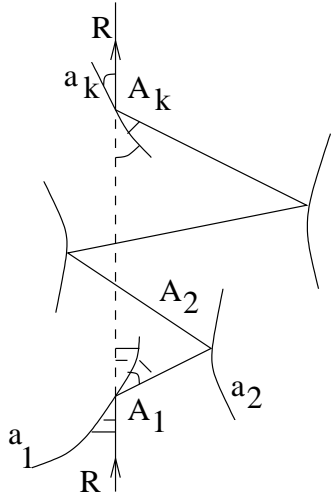}
    \caption{A $k$-invisible $k$-gon: new, invisibility reflection law at $A_1$ and $A_k$.}
    \label{fig:3}
  \end{center}
\end{figure}

\begin{proposition} \label{inv-ivr} Let $k\in\mathbb N$ and $B\subset\rr^2$ be a body 
 such that  no collection of $k$ germs of its boundary 
 forms a $k$-invisible billiard. Then the body $B$ is not $k$-reflective. 
 \end{proposition}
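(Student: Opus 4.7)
The plan is to prove the contrapositive: assume $B$ is $k$-reflective and construct $k$ germs of $\partial B$ forming a $k$-invisible billiard. Start with a positive-measure set $\mathcal{D}$ of invisibility directions $R$ whose trajectories make exactly $k$ reflections; after discarding a measure-zero subset (directions whose ray becomes tangent to $\partial B$ or hits a non-smooth point of $\partial B$), each $R\in\mathcal{D}$ produces a well-defined complete trajectory $A_1\dots A_k$ of $k$ regular reflection points. Since $\partial B$ has only finitely many smooth pieces, a pigeonhole argument produces a fixed ordered $k$-tuple of smooth pieces $(a_1,\dots,a_k)$ and a positive-measure subset $\mathcal{D}'\subset\mathcal{D}$ such that $A_j\in a_j$ for every $j$ and every $R\in\mathcal{D}'$.

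Next I would verify that each polygon $A_1\dots A_k$ arising from $\mathcal{D}'$ is a $k$-invisible orbit for $(a_1,\dots,a_k)$. At any interior vertex $A_j$ with $1<j<k$ the usual billiard law at $A_j$ is exactly the exterior-bisector condition required by the definition. The subtle points are $j=1$ and $j=k$. By the definition of invisibility, the incoming half-line at $A_1$ and the outgoing half-line at $A_k$ lie on the same oriented line $R$, so $A_0=A_k$ sits on $R$ \textit{ahead} of $A_1$ in the propagation direction (and symmetrically $A_{k+1}=A_1$ sits behind $A_k$). Writing $v,w$ for the unit incoming and outgoing velocities at $A_1$, the rays $A_1A_0$ and $A_1A_2$ from $A_1$ therefore have directions $v$ and $w$ respectively; the standard reflection law makes $v+w$ tangent to $a_1$, but $v+w$ is simultaneously the interior bisector of $\angle A_0A_1A_2$, so $T_{A_1}a_1$ realizes the invisibility condition at $A_1$. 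The same computation at $A_k$ completes the reflection-law check.

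Finally, to make $(a_1,\dots,a_k)$ (restricted to appropriate germs) a $k$-invisible billiard, I would transfer positive measure from $\mathcal{D}'$ to $a_1\times a_2$. The map $R\mapsto(A_1,A_2)$ from $\mathcal{D}'$ into $a_1\times a_2$ is smooth and locally invertible: given $(A_1,A_2)$ close to an orbit, $R$ is recovered as the oriented line through $A_1$ obtained by reflecting the line $A_1A_2$ in $T_{A_1}a_1$. Hence the image of this map has positive two-dimensional measure, and each pair in the image is extendable to a $k$-invisible orbit $A_1\dots A_k$. Localizing each $a_j$ to a germ around a single such orbit produces the desired $k$-invisible billiard, contradicting the hypothesis of the proposition.

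The main obstacle I anticipate is the reflection-law identification at the endpoints $A_1$ and $A_k$: one must track carefully which physical ray plays the role of the cyclic neighbor $A_0=A_k$ (respectively $A_{k+1}=A_1$), and recognize that because this neighbor sits on the same side as the incoming (respectively outgoing) ray rather than the opposite side, the usual exterior-bisector law at a reflection point becomes the interior-bisector condition of the invisibility definition. Once this bookkeeping is in place, the remaining measure-transfer step is routine via the inverse function theorem.
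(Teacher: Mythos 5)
Your proposal addresses a statement for which the paper gives no written proof at all: the author simply remarks that Proposition \ref{inv-ivr} is ``implicitly contained'' in \cite[section 8]{pl}. So there is no argument in the paper to compare against line by line; what you wrote is precisely the kind of direct unpacking that the citation hides, and its core is correct. In particular, you correctly isolated the one genuinely non-routine point: since the output line coincides with the input line and must not meet the body again, the last reflection point $A_k$ lies on $R$ \emph{ahead} of $A_1$, so the cyclic neighbor $A_0=A_k$ is seen from $A_1$ in the incoming direction $v$ rather than in $-v$; hence the physical reflection law (the vector $v+w$ is tangent to the mirror) translates into the \emph{interior}-bisector condition of the invisibility definition at $A_1$ and $A_k$, and into the usual exterior-bisector condition at the intermediate vertices. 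The pigeonhole over ordered $k$-tuples of boundary pieces, the transfer of positive measure to $a_1\times a_2$ via the locally invertible map $R\mapsto(A_1,A_2)$, and the localization to germs (which should be done at a Lebesgue density point of the positive-measure set, using continuity of the extension map) are all sound, if compressed.

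There is, however, one definitional requirement you never verify, and it does need an argument: a $k$-invisible orbit must satisfy $A_j\neq A_{j+1}$ for \emph{all} $j$ cyclically, including $j=k$, i.e. $A_k\neq A_1$. Consecutive distinctness for $j<k$ is automatic (the particle travels a positive distance between reflections), but $A_k=A_1$ is a real possibility that your discarding step does not obviously exclude. It can be patched as follows. Suppose $A_1=A_k$; let $\rho$ be the reflection at $T_{A_1}\partial B$, let $v$ be the direction of $R$, $w=\rho(v)$ the direction toward $A_2$, and $u$ the direction of arrival from $A_{k-1}$. The last reflection gives $\rho(u)=v$, and since $\rho$ is an involution, $u=\rho(v)=w$. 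Then the segments $A_{k-1}A_1$ and $A_1A_2$ lie on one line through $A_1$, on opposite sides of $A_1$, and both lie outside the body; if that line were transversal to $\partial B$ at $A_1$, one of the two segments would enter the interior of $B$ near $A_1$ --- a contradiction. Hence that line is tangent to $\partial B$ at $A_1$, which forces $v=u=w$ to be tangent, i.e. $R$ itself is tangent to $\partial B$ at $A_1$, and so $R$ lies in the measure-zero tangency set you already discarded. With this argument added (and with a one-line justification that the discarded sets really have measure zero --- e.g. because the exterior billiard map on the space of oriented lines is a local diffeomorphism wherever it is smooth, so preimages of the measure-zero tangency and corner sets under its iterates are again of measure zero), your proof is complete.
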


Proposition \ref{inv-ivr} is implicitly contained in \cite[section 8]{pl}. 

\begin{proposition} \label{invcomp} 
Let a collection of $k$ germs of planar analytic curves be a $k$-invisible billiard. Then its complexification is a 
$k$-reflective billiard. 
\end{proposition}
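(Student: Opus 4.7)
The plan is first to check that every real $k$-invisible orbit, viewed as a cyclic configuration of complex points on the complexified curves, satisfies the complex reflection law of Definition \ref{deforb}, and then to promote the real positive-measure family of such orbits to a complex open one by the identity principle on a maximal totally real submanifold. The first step rests on the elementary observation that two distinct real lines $\ell,\ell'$ through a point $P$ admit exactly two real axes of symmetry, mutually perpendicular: the interior and exterior bisectors of the angles they form. Reflection about either bisector interchanges the unoriented lines $\ell,\ell'$, and this identity extends holomorphically to their complexifications. Since Definition \ref{deforb} merely requires that the complexified tangent $T_{A_j}a_j$ be an axis of symmetry of the complex lines $A_{j-1}A_j$ and $A_jA_{j+1}$, it is satisfied whenever the real tangent is \emph{either} the interior or the exterior bisector of $\angle A_{j-1}A_jA_{j+1}$. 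In a $k$-invisible orbit the tangent is the exterior bisector for $j\neq 1,k$ and the interior bisector for $j=1,k$, so in every case the complex reflection law holds; the transversality and distinctness conditions pass to the complexification trivially, and hence each real $k$-invisible orbit is a $k$-periodic orbit of the complexified billiard.

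Next, I fix a particular real invisibility orbit $A_1^0,\dots,A_k^0$. Near it the implicit function theorem, applied to the reflection and incidence equations, yields real-analytic functions $A_j(A_1,A_2)$ in a neighborhood of $(A_1^0,A_2^0)$ in the $2$-manifold $\hat a_1\times\hat a_2$, and the closing-up requirement becomes a pair of real-analytic equations on that neighborhood. The $k$-invisibility hypothesis forces these equations to be satisfied on a set of positive $2$-dimensional measure; being real-analytic, the solution set must then contain a non-empty open subset $U\subset\hat a_1\times\hat a_2$ in which every pair extends to a real invisibility orbit.

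Finally I analytically continue. The same branches defining $A_j(A_1,A_2)$ extend, by the holomorphic implicit function theorem applied to the complexified reflection and incidence equations, to single-valued holomorphic branches on a complex neighborhood $V\subset a_1\times a_2$ of any chosen point of $U$. These produce a holomorphic map $\Phi\colon V\to a_1\times a_2$ with $\Phi(A_1,A_2)=(A_{k+1},A_{k+2})$, whose fixed-point locus is exactly the set of complex $k$-periodic orbits extending $(A_1,A_2)$. Each component of $\Phi-\mathrm{id}$ is holomorphic on $V$ and vanishes on the open subset $U\cap V$ of $\hat a_1\times\hat a_2$, which is a totally real submanifold of $V$ of real dimension $2=\dim_{\mathbb C}V$. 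The identity principle for holomorphic functions vanishing on a maximal totally real submanifold forces $\Phi\equiv\mathrm{id}$ on $V$, so that every $(A_1,A_2)\in V$ extends to a complex $k$-periodic orbit, and the complexified billiard is $k$-reflective. The most delicate point is arranging the branch choices so that the extension from real to complex analytic is genuinely single-valued on an open $V$, after which the identity principle applies directly.
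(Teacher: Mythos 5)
Your proof is correct and follows essentially the same route as the paper: the paper's entire proof is the one-line observation that the usual reflection law and the invisibility reflection law at $A_1$ and $A_k$ are two different real forms of the same complex reflection law (your interior/exterior bisector remark), with the passage from a positive-measure set of real orbits to an open set of complex orbits left implicit under ``definition and analyticity.'' Your second and third steps simply flesh out that analyticity argument in the standard way; the only point to add is that the base orbit $A_1^0,\dots,A_k^0$ must be chosen at a Lebesgue density point of the set of invisible orbits, so that the closing-up equations really do vanish on a positive-measure subset of the implicit-function-theorem neighborhood before the real-analytic identity theorem and the totally real identity principle are invoked.
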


The proposition follows from definition and analyticity: both the usual reflection law and the invisibility reflection law at $A_1$ and $A_k$ 
from the above definition are two different real forms of the complex reflection  law.

\begin{corollary} \label{invis1} There are no 3-reflective bodies in $\rr^2$ with piecewise-analytic boundary.
\end{corollary}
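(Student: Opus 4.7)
The plan is to proceed by contradiction, assembling the three results that immediately precede the corollary. Suppose that $B\subset\rr^2$ is a body with piecewise-analytic boundary that is $3$-reflective, that is, the set of invisibility directions with three reflections has positive measure. I want to show that this assumption is incompatible with Theorem \ref{three}.

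The first step is to invoke the contrapositive of Proposition \ref{inv-ivr}: from the $3$-reflectivity of $B$ I extract a collection of three germs $a_1,a_2,a_3$ of $\partial B$ that form a $3$-invisible billiard in the sense of the preceding definition (the relevant tangent lines being exterior bisectors at the middle vertex and interior bisectors at the two endpoint vertices). Since $\partial B$ is piecewise-analytic, each germ $a_j$ lies on one of the finitely many analytic pieces of $\partial B$, so each $a_j$ is a germ of a real analytic planar curve. This is exactly the hypothesis required by Proposition \ref{invcomp}, which then yields that the complexification $(a_1^{\cc},a_2^{\cc},a_3^{\cc})$ is a $3$-reflective complex analytic billiard with three mirrors. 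But Theorem \ref{three} asserts that no such billiard exists, giving a contradiction.

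There is no essential obstacle: once the real-analytic reflection law at the endpoints is recognized as a real form of the complex reflection law (which is precisely what underlies Proposition \ref{invcomp}), the corollary is a one-line combination of Proposition \ref{inv-ivr}, Proposition \ref{invcomp}, and Theorem \ref{three}. The only point deserving a brief check is that the positive-measure set of invisibility directions produced by the hypothesis on $B$ is indeed captured by some single triple of germs (rather than being smeared over infinitely many combinatorial types); but this is immediate because the boundary has only finitely many analytic pieces, so a pigeonhole argument selects a triple whose associated set of $3$-invisible orbits has positive measure, as needed for Proposition \ref{inv-ivr}.
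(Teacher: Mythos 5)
Your proof is correct and takes essentially the same route as the paper, which derives Corollary \ref{invis1} precisely by combining Proposition \ref{inv-ivr} (in contrapositive form), Proposition \ref{invcomp}, and Theorem \ref{three}. The pigeonhole remark at the end is harmless but redundant, since that selection of a single triple of germs is already the content of Proposition \ref{inv-ivr} itself.
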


\begin{remark} 
Corollary \ref{invis1} is known to specialists. 
As it is stated  in A.Plakhov's book \cite{pl} (after conjecture 8.2),  Corollary \ref{invis1} can be proved  by adapting  
the proof of Ivrii's conjecture for triangular orbits.  A.Plakhov's unpublished proof of Corollary \ref{invis1}  follows \cite{W}. 
\end{remark}

\begin{corollary} \label{invis2} Let $B\subset\rr^2$ be a body with piecewise-algebraic boundary, and let the complexifications of its algebraic pieces contain no isotropic point at infinity. Then $B$ is not $k$-reflective for every odd $k$.
\end{corollary}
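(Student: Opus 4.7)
The plan is to chain together the three results already in hand: Proposition \ref{inv-ivr}, Proposition \ref{invcomp}, and Theorem \ref{odd}. I would argue by contradiction. Suppose $B$ is $k$-reflective for some odd $k$. Then by the contrapositive of Proposition \ref{inv-ivr}, some collection of $k$ germs of the boundary $\partial B$ forms a $k$-invisible billiard in the sense of the invisibility definition above. Each such germ lies on one of the finitely many algebraic pieces of $\partial B$, so after complexification each germ sits inside a well-defined complex algebraic curve.

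Next I would apply Proposition \ref{invcomp}: the complexifications $a_1,\dots,a_k$ of the $k$ germs form a $k$-reflective complex analytic billiard, where the real invisibility reflection law at the endpoints $A_1, A_k$ and the usual real reflection law at the interior vertices are just two different real forms of the single complex reflection law. By the identity principle the open set of $k$-invisible real orbits complexifies to an open set of $k$-periodic complex orbits in a neighborhood in $a_1 \times \cdots \times a_k$.

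The one point that requires a line of care is upgrading from germs to algebraic mirrors: the complex $k$-reflective family extends by analytic continuation along the irreducible algebraic curves carrying the germs, yielding a complex algebraic $k$-reflective billiard whose mirrors are precisely the complexifications of the algebraic boundary pieces (or irreducible components thereof). By hypothesis these algebraic pieces avoid the isotropic points $I_1, I_2$ at infinity, so the hypotheses of Theorem \ref{odd} are satisfied.

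Finally I invoke Theorem \ref{odd}, which rules out any $k$-reflective complex algebraic billiard with odd $k$ and mirrors avoiding $I_1, I_2$. This yields the desired contradiction. The only genuine obstacle I foresee is the bookkeeping in the previous paragraph, namely checking that the open set of invisibility orbits does give rise to an open set of $k$-periodic complex orbits on the \emph{ambient} algebraic curves (not merely on the germs), so that Theorem \ref{odd} applies verbatim; this is routine analytic continuation but deserves an explicit mention.
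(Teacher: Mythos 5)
Your proof is correct and follows exactly the paper's route: the paper deduces Corollary \ref{invis2} in one line from Propositions \ref{inv-ivr}, \ref{invcomp} and Theorem \ref{odd}, which is precisely your chain of implications. The germ-to-algebraic-mirror passage you flag is indeed routine, since an open set of invisible orbits for the germ billiard complexifies (via Proposition \ref{invcomp}) to an open set of periodic orbits that are literally orbits of the billiard whose mirrors are the full complexified algebraic curves, and these avoid $I_1$, $I_2$ by hypothesis.
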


Corollaries  \ref{invis1} and \ref{invis2} follow from Propositions \ref{inv-ivr}, \ref{invcomp} and 
Theorems \ref{three} and \ref{odd}, analogously to Corollary \ref{cor-iv}.

\section{Maximal analytic extension and complex reflection law}

\subsection{Maximal analytic extension}
Recall that a germ $(a,A)\subset\cp^n$ of analytic curve is {\it irreducible}, if it is the image of a germ of analytic mapping 
$(\cc,0)\to(a,A)$. 
\begin{definition} \label{order} Consider two holomorphic mappings of Riemann surfaces $S_1$, $S_2$ with base points $s_1\in S_1$ and 
$s_2\in S_2$ 
to $\cp^n$, $f_j:S_j\to\cp^n$, $j=1,2$, $f_1(s_1)=f_2(s_2)$. We say that 
$f_1\leq f_2$, if there exists a holomorphic 
mapping $h:S_1\to S_2$, $h(s_1)=s_2$, such that $f_1=f_2\circ h$. This defines a partial order on the set of classes of Riemann surface 
mappings to $\cp^n$ up to conformal reparametrization respecting base points. 
\end{definition}

\begin{proposition} \label{ext} Every irreducible germ of analytic curve in $\cp^n$ has maximal analytic extension. In more detail, let  
 $(a,A)\subset\cp^n$ be an irreducible germ of analytic curve. There exists an abstract Riemann surface $\hat a$ with base 
 point $\hat A\in\hat a$ (the so-called {\bf maximal normalization} of the germ $a$) 
 and a holomorphic mapping $\pi_a:\ha\to \cp^n$, $\pi_a(\hat A)=A$ with the following properties:
 
 -  the image of germ at $\hat A$ of the mapping $\pi_a$ 
 is contained in $a$; 
 
 -  $\pi_a$ is the maximal mapping with the above property in the sense of Definition \ref{order}.
 
 Moreover, the mapping $\pi_a$ is unique up to composition with conformal isomorphism of Riemann surfaces respecting base points. 
 \end{proposition}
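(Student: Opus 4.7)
The plan is to realize $\hat{a}$ as the abstract Riemann surface of analytic continuations of a local parametrization of $(a,A)$, verify its universal property by a monodromy--style argument, and then deduce uniqueness formally.

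First, using irreducibility, I would fix a local parametrization $\phi_{0}\colon(\cc,0)\to(\cp^{n},A)$ whose image equals the germ $(a,A)$. I would then build $\hat{a}$ as the set of equivalence classes of pairs $(\gamma,\tilde\phi)$, where $\gamma\colon[0,1]\to\cp^{n}$ is a continuous path starting at $A$ and $\tilde\phi=\{\phi_{t}\}$ is an analytic continuation of $\phi_{0}$ along $\gamma$ (a continuous family of germs $\phi_{t}\colon(\cc,0)\to(\cp^{n},\gamma(t))$ agreeing with $\phi_{0}$ at $t=0$); two pairs are identified when their terminal germs coincide up to a conformal germ automorphism of $(\cc,0)$. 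Equipping $\hat{a}$ with the standard topology whose basic neighborhoods of $[(\gamma,\tilde\phi)]$ are obtained by extending $\gamma$ inside a chart on which $\phi_{1}$ is defined, one gets a connected Riemann surface. The projection $\pi_{a}\colon\hat{a}\to\cp^{n}$, $[(\gamma,\tilde\phi)]\mapsto\gamma(1)$, is then automatically holomorphic, and the class of the constant path at $A$ furnishes the base point $\hat{A}$, with germ of $\pi_{a}$ at $\hat{A}$ equal to $\phi_{0}$, hence with image sitting inside $(a,A)$.

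Second, I would establish maximality. Given any triple $(S,s,f)$ with $f\colon S\to\cp^{n}$ holomorphic and with germ at $s$ contained in $(a,A)$, irreducibility combined with the existence of a local uniformizer at $s$ produces a germ $\psi_{0}\colon(\cc,0)\to(S,s)$ with $f\circ\psi_{0}=\phi_{0}$. I would then define $h\colon S\to\hat{a}$ by sending $x\in S$ to the class represented by $(f\circ\alpha,\tilde\phi_{\alpha})$, where $\alpha$ is any path in $S$ from $s$ to $x$ and $\tilde\phi_{\alpha}$ is the continuation of $\phi_{0}$ obtained by transporting $\psi_{0}$ along $\alpha$ and postcomposing with $f$. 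Well-definedness comes from homotopy invariance of analytic continuation (combined with simple connectedness of small disks in $S$); holomorphy and the identities $\pi_{a}\circ h=f$, $h(s)=\hat{A}$ are immediate in local coordinates. Uniqueness of $h$ for the given data follows from the identity principle on the connected surface $S$.

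The step I expect to require most care is showing that $\hat{a}$ is Hausdorff --- equivalently, that two classes of continuations projecting to the same point of $\cp^{n}$ and approaching one another can be separated in the topology just defined. This is exactly the place where irreducibility of $(a,A)$ is needed, since it prevents the appearance of several distinct ``sheets'' carrying the same local data. Once $\hat{a}$ is shown to be a genuine Riemann surface satisfying the universal property, uniqueness up to pointed conformal isomorphism is a formal consequence: two maximal extensions dominate each other, and the resulting compositions are pointed holomorphic self-maps whose germs at the base point are the identity, hence are the identity by the identity principle.
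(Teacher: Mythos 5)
Your construction of $\hat a$ is in substance the same as the paper's: the paper continues irreducible germs of curves along (piecewise-analytic) paths and takes equivalence classes, while you continue parametrizations modulo conformal reparametrization, which amounts to the same space. The genuine problem is in your maximality step, where the factorization goes the wrong way. You ask for a germ $\psi_0\colon(\cc,0)\to(S,s)$ with $f\circ\psi_0=\phi_0$, i.e.\ you factor $\phi_0$ through $f$; such a $\psi_0$ exists only when the germ of $f$ at $s$ is itself a \emph{primitive} parametrization of $(a,A)$. But the proposition requires $\pi_a$ to dominate \emph{every} map whose germ has image contained in $a$, including non-primitive ones: if $f(w)=\phi_0(w^2)$ with $s=0$, no $\psi_0$ exists and your construction of $h$ never starts. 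Even when $f$ is primitive at the base point, your recipe (``transport $\psi_0$ along $\alpha$ and postcompose with $f$'') breaks at points of $S$ where $f$ fails to be primitive. Concretely, let $a$ be the germ of the parabola $\{y=x^2\}$ at $A=(1,1)$, $\phi_0(t)=(1+t,(1+t)^2)$, $S=\cc$, $s=1$, $f(w)=(w^2,w^4)$. Then $\psi_0(t)=\sqrt{1+t}$ exists near $s$, but its continuation along a path from $1$ to $0$ degenerates to $\tau\mapsto\sqrt\tau$, which is not a holomorphic germ, so $h(0)$ is left undefined by your procedure, even though the domination $f\le\pi_a$ holds (via $w\mapsto\pi_a^{-1}(w^2,w^4)$). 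The correct direction, which is what the paper does, is to factor $f$ through $\pi_a$: near each point, $\pi_a$ is the primitive (Puiseux) parametrization of the continued germ, every holomorphic map into that germ factors through it, so $h=\pi_a^{-1}\circ f$ is holomorphic locally and is then continued along paths.

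Two further gaps. First, your well-definedness argument for $h$ (``homotopy invariance of analytic continuation'') only produces a map from the universal cover of $S$, since $S$ need not be simply connected; one must argue, as the paper does, that the value of $h$ at $x$ is the class determined by the endpoint $f(x)$ and the terminal germ, and that both of these are determined by $x$ alone because $f$ is single-valued, so all paths give the same point of $\hat a$ by the very definition of the equivalence. Second, you never address second countability of $\hat a$: Hausdorffness plus local charts does not make a Riemann surface under the usual definition. This is precisely where the paper invests real work (a Poincar\'e--Volterra type argument: a countable dense family of lifted rational rectilinear paths yields a countable basis); alternatively one could invoke Rad\'o's theorem, but some argument is required and your proposal gives none.
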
 

\begin{proof} The proposition is classical, and some specialists believe it goes up to Weierstrass. Let us give its proof for completeness of 
presentation. Let $\Psi$ denote the set of 
 all the piecewise-analytic paths $\gamma:[0,1]\to\cp^n$, $\gamma(0)=A$ with analytic pieces $\gamma([t_{j-1},t_j])$, $0=t_0<t_1<\dots
 <t_N=1$, that have the following properties:
 
 - the image of the germ at 0 of the mapping $\gamma$ lies in the germ $(a,A)$; 
 
 - if $\gamma\not\equiv const$, then $\gamma|_{[t_{j-1},t_j]}\not\equiv const$ for every $j=1,\dots,N$; 
 
 - for every $j$ the images of germs at $t_j$ of both mappings $\gamma|_{[t_{j-1},t_j]}$ and $\gamma|_{[t_j,t_{j+1}]}$ 
 lie in one and the same irreducible germ of analytic curve at $\gamma(t_j)$. 

Every path $\gamma\in\Psi$ is contained 
in a unique irreducible germ $\Gamma$ at $\gamma([0,1])$ of analytic curve. In particular, 
for every $\gamma\in\Psi$, set $g=\gamma(1)$, the germ of the path $\gamma$ at 1 is contained in 
a unique irreducible germ $\Gamma^1$ of analytic curve at $g$. 
We say that two paths $\gamma_1,\gamma_2\in\Psi$  are {\it equivalent}, if $g_1=g_2$ and $\Gamma_1^1=\Gamma_2^1$. 
 Let $\ha$ denote the set of all the equivalence classes of paths from $\Psi$. 
The $C^0$-topology on the space of paths $[0,1]\to\cp^n$ induces the quotient topology 
on the set $\ha$.  There is a natural projection $\pi_a:\ha\to\cp^n$: $\gamma\mapsto\gamma(1)$. 

\medskip

{\bf Claim.} {\it The set $\ha$ equipped with 
the induced topology admits a natural structure of Riemann surface so that the projection $\pi_a$ is holomorphic.}

\begin{proof} 
The space $\ha$ is identified with an appropriate set of irreducible germs of analytic curves in $\cp^n$. For every path 
$\gamma\in\Psi$ there exists an $\var>0$ such that every path in $\Psi$ $\var$-close to $\gamma$ lies in the analytic curve 
germ $\Gamma\supset\gamma([0,1])$. This follows from 
definition. Hence, each germ $(\Gamma^1,g)\in\ha$ admits a basis of neighborhoods that are identified with neighborhoods 
of the marked point $g$ in the local analytic curve $\Gamma^1$. In particular, the space $\ha$ is Hausdorff. 
Now for the proof of the claim it suffices to show that the space  $\ha$ has a countable basis: 
then the Riemann surface structure and holomorphicity of projection are immediate. Let us fix an affine chart  $\cc^n\subset\cp^n$ with 
the origin at $A$. Let $L\subset\cp^n$ be a coordinate line such that the coordinate projection of the germ $(a,A)$ to $L$ is non-constant. 
Fix  real coordinates $(x,y)$ on $L$. Let $\Lambda$ denote the set of paths $\gamma\in\Psi$  that are projected to 
``rational rectilinear paths'': piecewise-linear paths in $L$ with vertices having rational coordinates and with edges being 
parallel to $x$ and $y$ axes. The countable subset $\Lambda\subset\Psi$ is dense: each path $\gamma\in\Psi$ can be 
obviously approximated by liftings to $\Gamma$ of rational rectilinear paths. For every analytic curve in $\cp^n$ we measure 
distances between its points in the intrinsic metric induced by the Fubini--Studi metric of the projective space. For every 
$\gamma\in\Lambda$ let us consider the corresponding germ $(\Gamma^1,g)$ and take those $2^{-n}$-neighborhoods  in $\Gamma^1$ 
of the point $g$ that are relatively compact in the Riemann surface $\Gamma^1$. They  are canonically identified with 
neighborhoods of the point $[\gamma]\in\ha$. Now let us cover the projective space by a finite number of affine charts and 
construct similar neighborhoods with respect to each chart. The neighborhoods thus constructed 
form a countable basis of topology of the space $\ha$, which 
follows immediately from definition and construction. This proves the claim.
\end{proof}

Thus, the set $\ha$ is a Riemann surface, and the projection $\pi_a:\ha\to\cp^n$ is an analytic extension of the germ $a$. 
Let us show that this is a maximal analytic extension. Let $\phi:S\to\cp^n$ be a holomorphic mapping of a  Riemann surface $S$, 
and its germ at a base point $s\in S$ parametrizes the germ $(a,A)$ 
(not necessarily bijectively). Consider the mapping $h=\pi_a^{-1}\circ\phi$, which 
is holomorphic and well-defined in 
a neighborhood of the point $s$. It extends up to a holomorphic mapping $h:S\to\ha$ such that $\phi=\pi_a\circ h$. Indeed, it 
extends analytically along every locally-nonconstant piecewise-analytic 
path $\alpha:[0,1]\to S$ starting at $s$,  and one has $\phi\circ\alpha\in\Psi$, by construction. The result of analytic extension 
depends only on the end-point $\alpha(1)$, since  $\phi$ is holomorphic single-valued  
and by the definition of the space $\ha$. This proves the maximality of the mapping $\pi_a$. Let us prove that a maximal 
mapping is unique up to composition with conformal isomorphism. Indeed, let $\phi_1:S_1\to\cp^n$ and $\phi_2:S_2\to\cp^n$ 
be two maximal mappings, whose germs at $s_1\in S_1$ and $s_2\in S_2$ parametrize the germ $(a,A)$. It follows from 
maximality that both latter local parametrizations are 1-to-1. Therefore, there exists a unique germ $h:(S_1,s_1)\to (S_2,s_2)$ such that 
$\phi_1=\phi_2\circ h$. It should extend holomorphically to $S_1$, by maximality of the mapping $\phi_2$, and its inverse should 
extend to $S_2$, by maximality of the mapping $\phi_1$. Thus, $h:S_1\to S_2$ is a conformal isomorphism.  Proposition \ref{ext} 
is proved.
\end{proof}

\begin{example} The maximal normalization of a projective algebraic curve is its usual normalization: 
a compact Riemann surface parametrizing the curve bijectively, except for self-intersections. 
\end{example}

\subsection{Complex reflection law}

\def\mcl{\mathcal L}
The material presented in this subsection is contained in \cite[subsection 2.1]{alg}, except for Corollary \ref{ccoinc}.

We fix an Euclidean metric on $\rr^2$ and consider its complexification: the 
complex-bilinear quadratic form $dz_1^2+dz_2^2$ on the complex affine plane $\cc^2\subset\cp^2$. 
We denote the infinity line in $\cp^2$ by $\oc_{\infty}=\cp^2\setminus\cc^2$.   

\begin{definition}  The {\it symmetry} $\cc^2\to\cc^2$ with respect to a non-isotropic 
complex line $L\subset\cp^2$  is the unique non-trivial complex-isometric involution 
fixing the points of $L$. It extends to a projective transformation of the ambient plane $\cp^2$. 
For every $x\in L$ it acts on the space $\mcl_x=\cp^1$ of lines through $x$, and this action is called {\it symmetry at $x$}. 
If $L$ is an isotropic line through a finite point $x$, then a pair of  lines through $x$ is called symmetric with respect to $L$, if 
it is a limit of symmetric pairs of lines with respect to  non-isotropic lines converging to $L$. 
\end{definition}

\begin{lemma} \label{lim-refl}  Let $L$ be an isotropic line through a finite point $x$. A pair of lines $(L_1,L_2)$ through $x$ is 
symmetric with respect to $L$, if and only if some of them coincides with $L$.  
\end{lemma}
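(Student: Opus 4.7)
The plan is to parametrize the pencil $\mathcal{L}_x\cong\cp^1$ of lines through $x$ by slopes in affine coordinates in which $x$ is the origin and the isotropic directions are $\pm i$; say $L$ corresponds to $m=i$. For every non-isotropic $L'$ of slope $m'$, the symmetry $\sigma_{L'}$ is a Möbius involution of $\mathcal{L}_x$ with two distinct fixed points, $L'$ itself (slope $m'$) and its Euclidean perpendicular through $x$ (slope $-1/m'$, distinct from $m'$ precisely because $m'^2+1\neq 0$). The symmetry relation on a pair of slopes $(m_1,m_2)$ is then captured either by the harmonic cross-ratio condition $[m_1,m_2;m',-1/m']=-1$, or equivalently by the polynomial identity
\begin{equation*}
R(m_1,m_2,m'):=(m_1+m_2)(1-m'^2)-2m'(1-m_1m_2)=0.
\end{equation*}

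For the ``$\Rightarrow$'' direction I would take a limit $(L_1^n,L_2^n)\to(L_1,L_2)$ of symmetric pairs with non-isotropic $L'_n\to L$ (so $m'_n\to i$) and pass to the limit in $R(m_1^n,m_2^n,m'_n)=0$. A direct computation yields the factorization $R(m_1,m_2,i)=2i(m_1-i)(m_2-i)$, so $m_1=i$ or $m_2=i$, i.e., $L_1$ or $L_2$ coincides with $L$. Equivalently, both fixed points $m'_n$ and $-1/m'_n$ tend to $i$ (the isotropic line is its own perpendicular), so the cross-ratio would tend to $1$ rather than $-1$ unless one of the $L_i$ already equals $L$. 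The case when $m_1$ or $m_2$ is $\infty$ is handled by passing to homogeneous coordinates on $\cp^1$, in which the same factorization persists.

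For the ``$\Leftarrow$'' direction, assuming $L_1=L$ (so $m_1=i$) and picking an arbitrary target slope $m_2\in\cp^1$, I would exhibit the approximating sequence explicitly. Put $m'_n=i+\varepsilon_n$ with $\varepsilon_n\neq 0$, $\varepsilon_n\to 0$; using $m'_n\cdot(-1/m'_n)=-1$ one writes the involution as $\sigma_{L'_n}(z)=((m'_n-1/m'_n)z+2)/(2z-(m'_n-1/m'_n))$. Choosing $m_1^n=i+c_n\varepsilon_n^2$ and expanding, one finds that $m_2^n:=\sigma_{L'_n}(m_1^n)\to(1+2ic)/(i+2c)$ whenever $c_n\to c$, and this Möbius expression in $c$ sweeps out all of $\cp^1$. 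Selecting $c_n$ (possibly tending to $\infty$) so that $(1+2ic_n)/(i+2c_n)\to m_2$ while $c_n\varepsilon_n^2\to 0$ realizes $(L,L_2)$ as a limit of symmetric pairs, as required.

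The main obstacle is the ``$\Leftarrow$'' construction: because the two fixed points of $\sigma_{L'_n}$ collide as $L'_n\to L$, the involutions have no limit as self-maps of $\mathcal{L}_x$, and a naive first-order approach $m_1^n-i\sim\varepsilon_n$ forces $m_2^n$ to a single degenerate limit value. Tuning the approach rate to second order in $\varepsilon_n$ resolves the indeterminate form $0/0$ that appears in the Möbius expression for $\sigma_{L'_n}$ at $m=i$, and this extra degree of freedom is exactly what allows $L_2$ to be prescribed arbitrarily in $\cp^1$.
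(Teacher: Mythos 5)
Your proof is correct, but note that the paper itself does not prove Lemma \ref{lim-refl}: the text of Subsection 2.2 explicitly defers all of this material to \cite[subsection 2.1]{alg}, so the comparison is against that reference rather than an in-text argument. Your computations check out: the symmetry condition on slopes is indeed $R(m_1,m_2,m')=(m_1+m_2)(1-m'^2)-2m'(1-m_1m_2)=0$, the factorization $R(m_1,m_2,i)=2i(m_1-i)(m_2-i)$ is exact and persists in homogeneous coordinates, and in the converse direction the expansion $m'-1/m'=2i-i\varepsilon^2+O(\varepsilon^3)$ shows that with $m_1=i+c\,\varepsilon^2$ both numerator and denominator of $\sigma_{L'}(m_1)$ are of order $\varepsilon^2$, giving the limit $(1+2ic)/(i+2c)$, which is a M\"obius function of $c$ and hence attains every value in $\mathbb{CP}^1$; your observation that a first-order approach $m_1=i+b\varepsilon$ degenerates to the single limit $i$ is also accurate.

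The difference from the standard route (the one used in \cite{alg}) is the choice of coordinate on the pencil of lines through $x$. Instead of the slope $m$, one can use the projective coordinate $u=(1+im)/(1-im)$, which sends the two isotropic directions to $0$ and $\infty$; in this coordinate the reflection in a non-isotropic line of coordinate $u_0$ becomes simply $u\mapsto u_0^2/u$, so symmetric pairs are characterized by $u_1u_2=u_0^2$. Both implications then become one-line arguments: if $u_0^n\to 0$ then $u_1^nu_2^n\to 0$ forces one limit to be $0$; conversely, given a target pair $(0,u_2)$ with $u_2\neq 0$, the pairs $\bigl((u_0^n)^2/u_2,\,u_2\bigr)$ are symmetric and converge to it. Your second-order tuning $m_1^n=i+c_n\varepsilon_n^2$ is precisely the price of working in a coordinate where the two fixed points of the involution collide as $L'\to L$; the adapted coordinate separates them permanently at $0$ and $\infty$ and makes the degeneration transparent. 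What your version buys is self-containedness in the most naive parametrization and an explicit quantitative picture of how the symmetric pairs degenerate; what the coordinate change buys is brevity and the conceptual point that isotropic directions are the fixed points of the rotation group acting on the pencil.
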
 

\begin{convention} Everywhere below given an analytic curve $a\subset\cp^n$ and $A\in\ha$, we set $A'=\pi_a(A)$. By $T_Aa$ we denote the 
tangent line at $A'$ to the germ of curve $\pi_a:(\ha,A)\to(a,A')$. 
\end{convention}

\begin{definition} 
 Let $a_1,\dots,a_k\subset\cp^2$ be an analytic (algebraic) billiard, and let $\ha_1,\dots,\ha_k$ be the maximal normalizations of its 
 mirrors. The {\it completed $k$-periodic set} is the closure of the set of 
 those $k$-gons $A_1\dots A_k\in\hat a_1\times\dots\times\hat a_k$ for which 
 $A_1'\dots A_k'$  is a $k$-periodic billiard orbit.
 \end{definition}
  
 \begin{proposition} \label{comp-set} 
 The completed $k$-periodic  set $U$ is analytic 
 (algebraic). The billiard is $k$-reflective, if and only if the set $U$ has at least one two-dimensional irreducible component $U_0\subset U$ 
(which will be called the {\bf $k$-reflective component}).  For every point 
 $A_1\dots A_k\in U$ and every $j$ such that $A_{j-1}'\neq A_j'$ and $A_j'\neq A_{j+1}'$ the {\bf complex reflection law} holds: 
 
 - if the tangent line $l_j=T_{A_j}a_j$ is not isotropic, then the lines $A_{j-1}'A_j'$ and $A_j'A_{j+1}'$ are symmetric with respect to 
 $l_j$;
 
 - otherwise, if $l_j$ is isotropic (finite or infinite), then at least one of the lines $A_{j-1}'A_j'$ or $A_j'A_{j+1}'$ coincides with $l_j$.

If the billiard is $k$-reflective, then each projection $U_0\to\ha_j\times\ha_{j+1}$ is a submersion on an open dense subset (epimorphic, if 
the billiard is algebraic). 
\end{proposition}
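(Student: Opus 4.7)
My approach is: (a) write down explicit equations for the reflection law on the open locus of non-degenerate configurations and define $U$ as the closure of their solution set; (b) extend the reflection law to $U$ by a continuity argument, invoking Lemma~\ref{lim-refl} to handle isotropic degenerations; (c) derive the dimension count and surjectivity/submersion from the elementary fact that a periodic orbit is determined, up to finite ambiguity, by any pair of consecutive vertices.

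\emph{Analytic/algebraic structure and reflection law.} Let $V\subset\ha_1\times\cdots\times\ha_k$ be the open subset on which each $A_{j-1}'\neq A_j'$ and each tangent line $T_{A_j}a_j$ is non-isotropic. For each $j$ the symmetry $s_j$ of $\cp^2$ fixing $T_{A_j}a_j$ pointwise depends holomorphically (algebraically) on the tangent line, because reflection with respect to a non-isotropic line is a rational expression in its Pl\"ucker coordinates. Hence the condition $s_j(A_{j-1}'A_j')=A_j'A_{j+1}'$ cuts out an analytic (algebraic) subset $Z\subset V$, and I define $U$ as the closure of $Z$ in $\ha_1\times\cdots\times\ha_k$: Zariski closure in a projective variety in the algebraic case, and local analytic closure in the analytic case (using Lemma~\ref{lim-refl} to supply the correct limiting equations across isotropic degenerations). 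For $(A_1,\dots,A_k)\in U$ with $A_{j-1}'\neq A_j'\neq A_{j+1}'$ I approximate by a sequence in $Z$ and pass to the limit: if $T_{A_j}a_j$ stays non-isotropic, continuity of $s_j$ yields the symmetry statement; if $T_{A_j}a_j$ is isotropic, Lemma~\ref{lim-refl} forces one of the two edges to coincide with $T_{A_j}a_j$.

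\emph{Dimension count and submersion.} For a generic $(A_1,A_2)\in p_{12}(Z)$, the remaining vertices $A_3,\dots,A_k$ of an extending orbit are determined up to finite ambiguity by iterated reflection (each step intersects a reflected line with the next mirror in finitely many points), so every projection $p_{j,j+1}:U\to\ha_j\times\ha_{j+1}$ has generically finite fibers and $\dim U\leq 2$. By definition, the billiard is $k$-reflective iff $p_{12}(U)\subset\ha_1\times\ha_2$ has nonempty interior, which by the dimension bound is equivalent to the existence of a two-dimensional irreducible component $U_0\subset U$. Restricted to $U_0$, the projection $p_{j,j+1}|_{U_0}$ is a dominant holomorphic map between (possibly singular) surfaces, so on the complement of its Jacobian degeneracy locus in the smooth locus of $U_0$ — an analytic divisor — it is a submersion, and this complement is open and dense. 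In the algebraic case $U_0$ is projective, the image of $p_{j,j+1}|_{U_0}$ is closed of dimension $2$ in the irreducible surface $\ha_j\times\ha_{j+1}$, hence equal to it, yielding the epimorphism.

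\emph{Main obstacle.} The most delicate step is showing that $U$ is genuinely analytic across the boundary locus $(\ha_1\times\cdots\times\ha_k)\setminus V$, where the defining equations of $Z$ degenerate (either because two consecutive vertices coincide or a tangent line becomes isotropic). In the algebraic case this is free via Zariski closure, but in the analytic case one must match the generic symmetry equation with its correct limiting form — a task for which Lemma~\ref{lim-refl} is precisely the right tool, replacing the symmetry equation by the isotropic-edge equation across the degeneration stratum. Once this local analytic extension is in place, the finite-fiber reduction, dominance, and projective surjectivity are essentially formal.
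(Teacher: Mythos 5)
The paper never proves Proposition~\ref{comp-set} itself: Subsection~2.2 states that all of this material ``is contained in \cite[subsection 2.1]{alg}, except for Corollary \ref{ccoinc}'', so your attempt can only be measured against what a complete proof requires. Measured that way, your plan has the right architecture (extend the reflection equations across degenerations, then do a dimension count), but the step you yourself flag as the ``main obstacle'' --- analyticity of $U$ in the \emph{analytic} category --- is asserted rather than proved, and Lemma~\ref{lim-refl} alone cannot close it. The closure of an analytic subset $Z$ of an open set $V$ is in general not an analytic subset of the ambient space (components of $Z$ can accumulate along the boundary of $V$), so ``local analytic closure'' is not a definition, and knowing the correct limiting \emph{condition} (Lemma~\ref{lim-refl}) does not make the closure a zero set of holomorphic equations. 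What is needed is: (i) a reflection equation defined on the whole product, e.g.\ with $u,v$ the edge directions and $w$ the tangent direction, the single polynomial identity $\bigl(2\langle u,w\rangle w-\langle w,w\rangle u\bigr)\wedge v=0$, which is analytic everywhere and automatically specializes, when $\langle w,w\rangle=0$, to ``one of the edges coincides with the tangent line''; and (ii) the fact that for analytic sets $W$ and $D$, the closure $\overline{W\setminus D}$ is the union of the irreducible components of $W$ not contained in $D$, hence analytic. Taking $W$ to be the common zero set of the extended equations and $D$ the degeneracy locus, one gets $Z=W\setminus D$ and analyticity of $U=\overline{Z}$; item (ii) is the actual mechanism, and it is entirely absent from your argument, while item (i) is only gestured at. (Your separate continuity argument for the reflection law on $U$ is fine, except that Lemma~\ref{lim-refl} is stated for finite points, so vertices at infinity need a word.)

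There is also a genuine error in your setup, not just a gap: your $V$ omits the transversality requirement of Definition~\ref{deforb}, so your $Z$ contains ``tangential'' configurations in which an edge lies inside a tangent line (such a line reflects to itself, so the symmetry equation is satisfied trivially). Concretely, take all mirrors equal to one non-isotropic line $\ell$: this billiard has no periodic orbits at all (cf.\ Claim~1 in Section~3), yet the configurations with all $A_j\in\ell$ distinct and all edges along $\ell$ satisfy your equations and form a $k$-dimensional component of your $U$. This simultaneously destroys your bound $\dim U\le 2$, the generic finiteness of the fibers of $p_{j,j+1}$, and one direction of the equivalence ``$k$-reflective iff a two-dimensional component exists''. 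The repair is easy --- add the tangency locus (an analytic set) to the excised set $D$ so that $Z$ consists of genuine orbits only --- but it must be done. Once these two points are fixed, and once you note that in the algebraic case the Zariski closure you take coincides with the topological closure appearing in the definition of $U$ (because $Z$ is constructible), your remaining steps --- discrete-fiber dimension count, the Sard-type argument giving a submersion off a proper analytic subset, and properness of projective morphisms giving surjectivity --- are correct and standard.
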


\begin{definition} Let $a_1,\dots,a_k$ be a complex planar analytic (algebraic) billiard. 
A point $P\in\cp^2$ is {\it marked}, if it is either a cusp, or an isotropic tangency point of some mirror $a_j$. A point $P$ is {\it double}, 
if it is either a self-intersection of a mirror, or an intersection point of two distinct mirrors. 
\end{definition}

\begin{corollary} \label{ccoinc} Let $a_1,\dots, a_k$ be a $k$-reflective analytic billiard in $\cp^2$. Let $A_1\dots A_k\in\ha_1\times\dots\times\ha_k$ 
be a point of a $k$-reflective component, and let $A_j'=A_{j+1}'$ for some $j$. Then we have one of the following possibilities:

(i) $A'_j$ is either a marked, or a double point;

(ii) $a_1=\dots=a_k$, $A_1'=\dots=A_k'$; 

(iii) up to cyclic mirror renaming, 
there exists an $s<j$ such that $a_{s+1}=\dots=a_j$, $A_s'\neq A_{s+1}'=\dots= A_j'$, and the line $A_s'A_j'$ coincides with $T_{A_j}a_j$. 
\end{corollary}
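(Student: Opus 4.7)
My plan is to assume that alternative (i) fails---$A'_j$ is neither a marked nor a double point---and then deduce that (ii) or (iii) must hold. Write $P = A'_j = A'_{j+1}$.

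Let $I$ denote the maximal cyclic arc of consecutive indices $i$ with $A'_i = P$. For any two adjacent indices $i, i+1 \in I$, both mirrors $a_i$ and $a_{i+1}$ contain $P$; if they were distinct, $P$ would be a double point, contrary to assumption. Hence the mirrors indexed by $I$ all equal a common curve $a$. Since $P$ is also not a self-intersection of $a$ (not being double), the lifts $A_i \in \ha$ for $i \in I$ all coincide. If $I$ covers $\{1,\dots,k\}$, this is case (ii). Otherwise write $I = [s+1, t]$ cyclically so that $A'_s \ne P \ne A'_{t+1}$, and after a cyclic renaming of mirrors assume $t = j$.

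It remains to establish the tangency statement of (iii). By Proposition \ref{comp-set} the projection $U_0 \to \ha_{s+1} \times \ha_{s+2}$ is a submersion on a dense open subset, so its image contains an open subset of $\ha_{s+1}\times\ha_{s+2}$. The locus where $\pi_{a_{s+1}}(A_{s+1}) = \pi_{a_{s+2}}(A_{s+2})$ is a proper analytic subset of $\ha_{s+1}\times\ha_{s+2}$, so its preimage in $U_0$ is a proper analytic subset; and similarly for each other pair of consecutive indices. Removing the finite union of these proper subsets leaves an open dense subset of $U_0$, from which we extract a sequence $P^{(n)} \to P$ with all consecutive $A'^{(n)}_i$ distinct. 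At each $P^{(n)}$ the hypotheses of the reflection law in Proposition \ref{comp-set} apply at $A^{(n)}_{s+1}$; since $P$ is not marked, $T_P a$ is not isotropic, and neither is $T_{A^{(n)}_{s+1}} a$ for $n$ large, so the non-isotropic case holds: the lines $A^{(n)}_s A^{(n)}_{s+1}$ and $A^{(n)}_{s+1} A^{(n)}_{s+2}$ are symmetric with respect to $T_{A^{(n)}_{s+1}} a$.

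Finally pass to the limit. The first line tends to $A'_s P$; the axis tends to $T_P a$; and the second line is a secant of $a$ whose two endpoints both tend to the smooth point $P$, so it converges to $T_P a$. By continuity of the symmetry relation, the limiting lines $A'_s P$ and $T_P a$ are symmetric across $T_P a$; applying the reflection involution, which fixes the axis, to both sides yields $A'_s P = T_P a$, i.e., $A'_s A'_j = T_{A_j} a_j$. This is case (iii). The step I expect to be most delicate is the secant-to-tangent limit: one must ensure that the secant $A^{(n)}_{s+1}A^{(n)}_{s+2}$ really tends to $T_P a$ and not some other direction, which requires $P$ to be a smooth (non-cusp) point of $a$---guaranteed precisely because $P$ is not marked.
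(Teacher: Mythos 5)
Your proof is correct and follows essentially the same route as the paper's: the same case analysis (a maximal arc of vertices projecting to $P$, with the mirrors along the arc forced to coincide because $P$ is not a double point), and the same key step, namely a secant-to-tangent limit at the smooth non-marked point $P$ combined with the reflection law and the fact that the symmetry fixes its axis, giving $A_s'P=T_Pa$. The one place you diverge is in producing the approximating sequence of configurations with distinct consecutive vertices: you derive it from the submersion statement of Proposition \ref{comp-set} (the image of $U_0$ in each $\ha_i\times\ha_{i+1}$ contains an open set, hence each diagonal locus pulls back to a proper analytic subset of $U_0$, hence the complement of their union is dense), whereas the paper simply invokes the definition of the completed $k$-periodic set as the \emph{closure} of the set of genuine $k$-periodic billiard orbits, so the required sequence exists tautologically, and moreover consists of genuine orbits for which consecutive vertices are distinct and tangent lines are non-isotropic automatically by Definition \ref{deforb}. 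Your detour is valid but does extra work, and it obliges you to verify by continuity that the nearby tangent lines are non-isotropic so that the symmetric (rather than degenerate) case of the reflection law in Proposition \ref{comp-set} applies at the approximating points---a verification the paper's choice of sequence renders unnecessary.
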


\begin{proof} Everywhere below we consider that  the point $A_j'$ is neither marked, nor double: otherwise we have case (i). 
If $A_1'=\dots=A_k'$, then $a_1=\dots=a_k$, since otherwise the latter point, which 
coincides with $A_j'$, would be double, -- a contradiction. Thus, in this case we have (ii). 
Let now there exist an $s\in\{1,\dots,k\}$ such that $A_s'\neq A_j'$. Without loss of generality we 
consider that $s<j$ (after a possible cyclic mirror renaming), and we take the maximal $s$ as above. One has $a_{s+1}=\dots=a_j$, as in the above argument, 
and $A_{s+1}'=\dots=A_j'$. Let us show that $A_s'A_j'=T_{A_j}a_j$. By definition, 
the point $A_1\dots A_k$ is a limit of points $A_{1,n}\dots A_{k,n}$ corresponding to $k$-periodic billiard orbits, in particular, 
$A_{i,n}'\neq A_{i+1,n}'$ for all $i=1,\dots,k$. Thus, the distinct points $A_{s+1,n}'$ and 
$A_{s+2,n}'$ of the curve $a_j$ collide to the same limit $A_j'$, which is neither marked, nor double  point, 
while $A_{s,n}'$ and $A_{s+1,n}'$ don't collide in the limit.  Hence, $A_{s+1,n}'A_{s+2,n}'\to T_{A_j}a_j$. 
This together with the reflection law implies that  the limit line $A_s'A_{s+1}'=A_s'A_j'=\lim(A_{s,n}'A_{s+1,n}')$ coincides with $T_{A_j}a_j$. 
Thus, we have case (iii). This proves the corollary.
\end{proof}

\section{Algebraic billiards: proof of Theorem \ref{odd}}

As it is shown below, Theorem \ref{odd} is implied by the following proposition. 

\begin{proposition} \label{pisotrop} Let $a_1,\dots,a_k$ be a $k$-reflective planar algebraic billiard such that each mirror $a_j$ contains no 
isotropic point at infinity. Then it has at least one  finite $k$-periodic orbit with  an isotropic 
edge.  Moreover, the latter orbit can be realized by a point of a $k$-reflective component. 
\end{proposition}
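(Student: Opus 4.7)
Fix a $k$-reflective component $U_0\subset\hat a_1\times\cdots\times\hat a_k$; by Proposition \ref{comp-set}, $U_0$ is a 2-dimensional irreducible algebraic variety with every projection $p_j:U_0\to\hat a_j$ and $U_0\to\hat a_j\times\hat a_{j+1}$ epimorphic. For each $j$ let $\sigma_j:U_0\dashrightarrow\oc_\infty\cong\cp^1$ be the rational edge-direction map $(A_1,\dots,A_k)\mapsto A_j'A_{j+1}'\cap\oc_\infty$; it is defined on the dense open locus where $A_j'\neq A_{j+1}'$, which is indeed dense by Corollary \ref{ccoinc}. Set $B_i:=p_i^{-1}(\hat a_i\cap\pi_{a_i}^{-1}(\oc_\infty))$; under the hypothesis of the proposition, the finitely many points of $a_i\cap\oc_\infty$ are all non-isotropic, and $\dim B_i\le1$.

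\textbf{Producing an isotropic edge.} If every $\sigma_j$ were constant on $U_0$, then the reflection law would force each tangent $T_{A_{j+1}}a_{j+1}$ to be the bisector of two fixed directions and hence have fixed direction along $\hat a_{j+1}=p_{j+1}(U_0)$, so every mirror would be a (union of parallel) lines; the resulting orbit would then be determined by $A_1\in\hat a_1$ alone, forcing $\dim U_0\le 1$, contradicting $\dim U_0=2$. So some $\sigma_j$ is non-constant. As a dominant rational map from the irreducible $U_0$ to $\cp^1$, it is surjective; therefore $\sigma_j^{-1}(I_1)$ is a nonempty 1-dimensional algebraic subvariety. Pick an irreducible component $C\subset\sigma_j^{-1}(I_1)$ of dimension $1$; every point of $C$ represents an orbit whose $j$-th edge is isotropic of direction $I_1$.

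\textbf{Generic finite orbit.} If $C\not\subset B:=B_1\cup\cdots\cup B_k$, then a generic point of $C$ has all vertices finite and gives a finite $k$-periodic orbit with isotropic $j$-th edge, realized by a point of the $k$-reflective component $U_0$, which is the desired conclusion.

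\textbf{Main obstacle.} The delicate case is $C\subset B$: by irreducibility $C\subset B_i$ for some $i$, so $A_i'$ is identically equal to a fixed non-isotropic point $P_i\in a_i\cap\oc_\infty$ along $C$. Both edges incident to $A_i$ then have direction $P_i$, i.e.\ $\sigma_{i-1}\equiv\sigma_i\equiv P_i$ on $C$, which directly contradicts $\sigma_j\equiv I_1$ when $j\in\{i-1,i\}$. Otherwise I propagate the reflection law outward from $A_i$ along $C$: at each generically finite neighbouring vertex $A_{i\pm1}$, Proposition \ref{is-refl} prevents the non-isotropic incoming direction $P_i$ from being reflected to an isotropic outgoing one unless the tangent $T_{A_{i\pm1}}a_{i\pm1}$ is itself isotropic; but if that tangent is isotropic at a 1-parameter family of points of $a_{i\pm1}$, then by analytic continuation $a_{i\pm1}$ itself would be an isotropic line, forbidden by the definition of complex billiard. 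If instead the propagation reveals a second vertex $A_{i'}$ held constant at infinity along $C$ (so $C\subset B_{i'}$ too), one repeats the argument at $A_{i'}$ with the new fixed non-isotropic direction; continuing around the orbit one eventually contradicts $\sigma_j\equiv I_1$. The bookkeeping of this propagation--managing the interplay between isotropic tangents, vertices constant at infinity, and the coincidence scenarios classified by Corollary \ref{ccoinc}--is the main technical obstacle of the proof, and is where the hypothesis that no mirror contains $I_1$ or $I_2$ is essentially used.
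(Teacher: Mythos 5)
Your construction of the isotropic-edge curve $C$ is sound, and it is genuinely different from the paper's: the paper produces the isotropic edge directly, as the Zariski closure $W_{12}\subset\hat a_1\times\hat a_2$ of pairs $(A_1,A_2)$ with $A_1'A_2'$ isotropic through $I_1$ (nonempty by an elementary argument), pulled back to the reflective component via epimorphicity of the projection (Proposition \ref{comp-set}), while you get it from dominance of an edge-direction map. But producing $C$ is not the heart of the proposition; the heart is showing that a generic point of such a curve is a genuine finite $k$-periodic orbit, and there your proposal has a real gap --- one you acknowledge yourself by deferring ``the bookkeeping'' as ``the main technical obstacle.'' Concretely: your easy case is wrong as stated. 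If $C\not\subset B$, a generic point of $C$ has finite vertices, but points of $U_0$ are merely \emph{limits} of orbits, so finiteness alone does not give an orbit in the sense of Definition \ref{deforb}. You must also exclude that two consecutive vertices coincide identically along $C$ ($A_m'\equiv A_{m+1}'$), that some vertex is stuck at a marked or double point (e.g.\ an isotropic tangency, where the tangent is isotropic and the reflection law degenerates), and that an edge is identically tangent to its mirror; none of these is implied by $C\not\subset B$. In the hard case $C\subset B_i$, your propagation sketch omits exactly the difficult sub-cases: neighbouring vertices constant along $C$ at \emph{finite} points, collisions with the vertex at infinity, and the recursion ``repeat at the next stuck vertex,'' which is never shown to terminate in a contradiction.

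The idea you are missing is the mechanism on which the paper's induction runs: because one edge of the family is isotropic, Proposition \ref{is-refl} propagates isotropy step by step around the orbit, so at each stage the outgoing edge passes through a fixed isotropic point at infinity ($I_1$ and $I_2$ alternating). Hence, if the next vertex were constant along the family, all the outgoing isotropic lines through the \emph{varying} current vertex and through $I_2$ (say) would contain that constant point, forcing it to equal $I_2$, i.e.\ $I_2\in a_{j+1}$ --- precisely what the hypothesis forbids. This single observation yields non-constancy of every successive vertex, and non-constancy kills all degeneracies at once, since marked, double and infinite points form finite subsets of each normalized mirror; moreover, isotropic edges are automatically transverse to the (non-isotropic) tangents, so the transversality requirement of Definition \ref{deforb} comes for free. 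That is how the paper's statements A) and B) about the sets $W_j$ are proved by induction around the orbit. Your sketch instead reasons with non-isotropic directions emanating from a vertex at infinity, which is why the case analysis proliferates; if you let the isotropy of the edges, rather than the infinitude of a vertex, drive the propagation, the bookkeeping you defer disappears.
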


\begin{proof} 
Let $U\subset\ha_1\times\dots\times\ha_k$ be a $k$-reflective component, see the Proposition \ref{comp-set}. 
Let $W_{12}\subset\ha_1\times\ha_2$ denote the Zariski closure of the set of those pairs of points 
$(A_1,A_2)$ whose projections $A_1'$ and $A_2'$ are distinct, finite 
and  for which the line $A_1'A_2'$ is an isotropic line through the isotropic point $I_1$ at infinity. We show that the pairs from a 
non-empty Zariski open subset in $W_{12}$ extend to orbits as in Proposition \ref{pisotrop}. This will prove the proposition. 
\medskip

{\bf Claim 1.} {\it The set $W_{12}$ is non-empty, and hence, it is an algebraic curve.}

\begin{proof} Suppose the contrary. Then each line through $I_1$ intersects the union $a_1\cup a_2$ in at most one finite point. 
This together with the assumption that $I_1\notin a_j$ implies that $a_1=a_2$ is a line. But in this case there would be no 
$k$-periodic orbits at all. Indeed, in a $k$-periodic orbit $A_1'\dots A_k'$ the line $A_1'A_2'$ should coincide with $a_1$, and hence, it cannot 
be transverse to $T_{A_1}a_1$, -- a contradiction to Definition \ref{deforb}. The contradiction to $k$-reflectivity thus obtained 
proves the claim.
 \end{proof}
 
%
%

Let $W\subset U$ denote the preimage in $U$ of the curve $W_{12}$ under the product projection to $\ha_1\times\ha_2$. The projection 
$W\to W_{12}$ is epimorphic, by Proposition \ref{comp-set}. 
For every $j=2,\dots,k+1$ let $W_j\subset W$ denote the set of points $A_1\dots A_k\in W$  such that for every $i\leq j$ the point $A_i'$ 
is finite, neither marked, nor double,   and $A_i'\neq A_{i-1}'$. By definition, 
one has $W_2\supset W_3\supset\dots\supset W_{k+1}$. We 
show  simultaneously by induction in $j$ that 

A) the subset $W_j\subset W$ is Zariski open and non-empty;

B) the product projection $W_j\to \ha_{j+1}$ is locally non-constant. 

The points of the set $W_{k+1}$ correspond to orbits as in Proposition \ref{pisotrop}. This will prove the proposition.

Induction base. Statement A) for $j=2$  follows from the above claim and the obvious fact that the points $A_1'$ and $A_2'$ vary along the 
curve $W_{12}$. Let us prove statement B). Suppose the contrary: there exists an open 
subset in $W$ of points  $A_1\dots A_k$ that are projected 
to one and the same point $Q=A_3\in \ha_3$. Then there exists an open set of  finite points $A_2'\in a_2$ 
such that the image of the isotropic line $A_2I_1$ under the symmetry with respect to the tangent line $T_{A_2}a_2$ passes through 
one and the same point $Q$. This follows by definition, Claim 1 and the epimorphicity of the projection $W\to W_{12}$.  
On the other hand, the above image should pass through the other isotropic point $I_2$, by Proposition \ref{is-refl}. Hence, $Q=I_2\in a_3$, -- 
a contradiction to the assumption that the mirrors 
$a_j$ contain no isotropic points at infinity. The induction base is proved. 

Induction step. Let the statements A) and B) be proved for all $j\leq r\leq k$. Let us prove them for $j=r+1$.  
For the proof of statement A) it suffices to show that the set of those points $A_1\dots A_k\in W_r$ for which the point  
$A_{r+1}'$ is finite, neither marked, nor double and distinct from $A_r'$ 
is Zariski open in $W_r$ and non-empty. Indeed, on a non-empty Zariski open subset  $\wt W\subset W_r$ the points $A_r'$ and $A_{r+1}'$ are 
 finite and neither marked, nor double,  
by statement B) for $j=r-1,r$ (the induction hypothesis). The line $A_{r-1}'A_r'$ is isotropic, being the image of an 
isotropic line $A_1'A_2'$ under a finite number of non-isotropic reflections. Its image under the reflection from the line $T_{A_r}a_r$ is 
the isotropic line $L=A_r'A_{r+1}'$ 
through $A_r'$ transverse to $A_{r-1}'A_r'$. One has $L\neq T_{A_{r+1}}a_{r+1}$ on the above subset $\wt W$, 
since the point $A_{r+1}'$ is not marked. Therefore, $A_r'\neq A_{r+1}'$ on the same subset, 
by Corollary \ref{ccoinc} and since $A_r\not\equiv const$ along $\wt W$ (the induction hypothesis: statement B) for $j=r$). This proves statement A). 
The proof of statement B) repeats the argument from the induction base. The induction step is over.  Statements A) and B) are proved. 
Proposition \ref{pisotrop} is proved.
\end{proof}

Let us now prove Theorem \ref{odd}. Suppose the contrary: there exists a $k$-reflective billiard $a_1,\dots,a_k$ with odd $k$, whose 
mirrors contain no isotropic points at infinity. Then it has a finite $k$-periodic orbit with at least one isotropic edge  
(Proposition \ref{pisotrop}). But then $k$ should be even by Corollary \ref{is-even}. The contradiction thus obtained proves Theorem \ref{odd}.

\section{Triangular orbits: proof of Theorem \ref{three}} 

We prove Theorem \ref{three} by contradiction. Suppose the contrary: there exists a 3-reflective analytic billiard  $a$, $b$, $c$ in $\cp^2$, 
let $U\subset\ha\times\hb\times\hc$ be its 3-reflective component. First we show in the next proposition  that 
the correspondence $\psi_b:(A,B)\mapsto (B,C)$ defined by the triangles $ABC\in U$ induces a bimeromorphic isomorphism 
$\ha\times \hb\to \hb\times\hc$. This implies (Corollary \ref{parab}) that each mirror is either a rational curve, or a parabolic Riemann surface. 
  Afterwards we deduce  that the mirrors are distinct 
(Proposition \ref{is-edge}) and there exists a one-dimensional family of triangles $ABC\in U$ with 
 isotropic edges $A'B'$. We then show that the existence of the latter triangle family 
 would contradict the complex reflection law satisfied by the points 
of the set $U$. The contradiction thus obtained will prove Theorem \ref{three}.


\begin{proposition} \label{bim} Let $a$, $b$, $c$, $U$ and $\psi_b$ be as above. 
The correspondence $\psi_b$ extends to a bimeromorphic\footnote{Recall that a {\it meromorphic mapping} $M\to N$ between complex 
manifolds is a mapping holomorphic on the complement of an analytic subset in $M$ such that the closure of its graph is an 
analytic subset in $M\times N$.}  isomorphism $\ha\times\hb\to\hb\times\hc$. 
\end{proposition}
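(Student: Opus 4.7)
The plan is to realize $U$ itself as the graph of $\psi_b$. The embedding $U\hookrightarrow(\ha\times\hb)\times(\hb\times\hc)$, $(A,B,C)\mapsto((A,B),(B,C))$, is injective, and through it $\psi_b$ becomes the correspondence whose two factor projections are $p_{ab}\colon U\to\ha\times\hb$, $(A,B,C)\mapsto(A,B)$, and $p_{bc}\colon U\to\hb\times\hc$, $(A,B,C)\mapsto(B,C)$. Proving $\psi_b$ bimeromorphic then reduces to showing each of $p_{ab}$, $p_{bc}$ is generically injective. By Proposition \ref{comp-set} both projections are submersions on a dense open subset of $U$, and since all three spaces are two-dimensional they are local biholomorphisms there; in particular each is generically finite, with some generic degree $d_{ab}$, $d_{bc}$, and it remains to show $d_{ab}=d_{bc}=1$.

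To prove $d_{ab}=1$ I would argue by contradiction. Suppose $(A,B,C_1),(A,B,C_2)\in U$ are two generic points sharing their first two coordinates, with $C_1\neq C_2$. The complex reflection law (Proposition \ref{comp-set}) applied at $B$ forces both lines $B'C_1'$ and $B'C_2'$ to equal the reflection $\ell_B$ of $A'B'$ about $T_Bb$; applied at $A$ it forces both lines $A'C_1'$ and $A'C_2'$ to equal the reflection $\ell_A$ of $A'B'$ about $T_Aa$. Hence $C_1',C_2'\in\ell_A\cap\ell_B$. For generic $(A,B)\in\ha\times\hb$ the two lines $\ell_A,\ell_B$ are distinct, since their coincidence would force $A'B'$ to be fixed by both of the non-trivial reflections about $T_Aa$ and $T_Bb$, which in turn would force $A'B'$ to agree with, or be complex-orthogonal to, a tangent line at each of its endpoints --- a proper analytic condition on $(A,B)$. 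Therefore $\ell_A\cap\ell_B=\{P\}$ is a single point and $C_1'=C_2'=P$. Since $P$ is a smooth point of $c$ for generic $(A,B)$, it has a unique preimage in $\hc$, giving $C_1=C_2$, a contradiction.

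The symmetric argument with the roles of $A$ and $C$ interchanged (applying the reflection law at $B$ and at $C$) yields $d_{bc}=1$. Thus both projections $p_{ab},p_{bc}$ are bimeromorphic, and the composition $p_{bc}\circ p_{ab}^{-1}$ is the desired bimeromorphic isomorphism $\ha\times\hb\to\hb\times\hc$ extending $\psi_b$. The main technical obstacle is the double-reflection step: one must verify that the degeneracy locus $\ell_A=\ell_B$ does not swallow up the entire component $U$. I would handle this by noting that such complete degeneracy would force $A'B'\equiv T_Aa$ or $A'B'\equiv T_Bb$ (or their complex-orthogonal counterparts) identically along $U$, contradicting the transversality requirement in Definition \ref{deforb} for a $3$-reflective component.
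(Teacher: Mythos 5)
Your middle step --- that two points of $U$ over the same $(A,B)$ would force $C_1'=C_2'\in\ell_A\cap\ell_B$ --- is exactly the right geometric observation, and it is the same one the paper uses. But the way you package it creates a genuine gap: you only ever produce a map on the open set $p_{ab}(U)\subset\ha\times\hb$ (more precisely on the image of the good part of $U$), and then assert that generic injectivity of $p_{ab}$ and $p_{bc}$ suffices for bimeromorphy. In the \emph{analytic} category this reduction fails. The proposition concerns analytic, not algebraic, billiards, so $\ha,\hb,\hc$ are arbitrary (possibly non-compact) Riemann surfaces; the projection $p_{ab}\colon U\to\ha\times\hb$ is then not proper, its image need not be the complement of an analytic subset (it could a priori be something like $D\times\cc$ for a disk $D$, whose complement is not analytic), and a holomorphic map defined on such an image with analytic graph closure is \emph{not} a meromorphic map $\ha\times\hb\to\hb\times\hc$ in the sense of the footnote, nor does it obviously extend to one. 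For the same reason your phrase ``generically finite, with some generic degree'' is unjustified: a local biholomorphism that is not proper need not have a well-defined degree (think of an infinite covering). The paper closes exactly this hole by turning your $\ell_A\cap\ell_B$ observation into a \emph{globally defined} map: $Q_{ab}(A,B)$ is declared to be the intersection point of the reflections of $A'B'$ in $T_Aa$ and $T_Bb$ for \emph{every} pair $(A,B)$ outside explicit degeneracies, whether or not $(A,B)$ bounds an orbit; this formula is meromorphic on all of $\ha\times\hb$ by algebraicity of the reflection law, its values lie on $c$, and $\psi_b(A,B)=(B,\pi_c^{-1}\circ Q_{ab}(A,B))$ is the required extension. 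That global formula is the actual content of the proposition, and it is what the subsequent Corollaries \ref{epi} and \ref{parab} (the family of conformal isomorphisms $\phi_B=\pi_c^{-1}\circ Q_{ab}(\cdot,B)\colon\ha\to\hc$ defined on essentially all of $\ha$) consume; your construction would not support them.

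Two smaller soft spots. First, in your final paragraph you claim that degeneracy $\ell_A\equiv\ell_B$ along $U$ would contradict ``the transversality requirement in Definition \ref{deforb}'': transversality rules out $A'B'\equiv T_Aa$, but not the other fixed line of the reflection, namely $A'B'$ complex-orthogonal to $T_Aa$ at $A'$; that branch needs a separate (e.g.\ dimension-count) argument. Second, ``$P$ is a smooth point of $c$ for generic $(A,B)$, hence has a unique preimage in $\hc$'' glosses over the double points of the maximal normalization: these form an at most countable, but in general not analytic, subset, so ``generic'' here is weaker than ``off an analytic set,'' which again clashes with the definition of meromorphic map you must ultimately verify.
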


\begin{proof}  It suffices to show that the mapping $\psi_b$ is meromorphic: the proof of the meromorphicity of its inverse is analogous. 
Consider the auxiliary mapping $Q_{ab}:\ha\times\hb\to\cp^2$ defined as follows. 
Take an arbitrary pair $(A,B)\in\ha\times\hb$ with $A'\neq B'$ and such that the line $A'B'$ is neither tangent to $a$ at $A'$, nor  tangent 
to $b$ at $B'$. Set $Q_{ab}(A,B)$ to be the point of intersection of two lines: the images of the line $A'B'$ under the symmetries with 
respect to the tangent lines $T_{A}a$ and $T_{B}b$. 
The mapping $Q_{ab}$ extends to a meromorphic 
mapping $\ha\times\hb\to\cp^2$, by the algebraicity of the reflection law. (Possible indeterminacies correspond to  isolated points 
where either $A'=B'$ is a double point, or one of the tangent lines $T_Aa$ or $T_Bb$ is isotropic and coincides with $A'B'$.)  
Note that $Q_{ab}(A,B)\in\pi_c(\hc)$ for every $(A,B)$ from the domain of the mapping $Q_{ab}$: given two vertices $A'\in a$ and $B'\in b$ 
of a triangular billard orbit, the third vertex is found as the intersection point of the above symmetric images of the line $A'B'$. This 
implies that the mapping $\psi_b$ extends to a meromorphic mapping $\ha\times\hb\to\hb\times\hc$ by the formula 
$\psi_b(A,B)=(B,\pi_c^{-1}\circ Q_{ab}(A,B))$. The proposition is proved.
\end{proof}

\begin{corollary} \label{epi} In Proposition \ref{bim} the projection $U\to\ha\times\hb$ is  bimeromorphic. The complement to its image 
is contained in the indeterminacy set for the mapping $Q_{ab}$, and hence, is at most discrete. 
\end{corollary}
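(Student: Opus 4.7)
The plan is to exhibit the projection $p: U \to \ha \times \hb$ as bimeromorphic by constructing an explicit meromorphic inverse out of Proposition \ref{bim}, and then to identify the complement of its image with a subset of the indeterminacy locus of $Q_{ab}$.

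First, I would define a meromorphic section $\phi: \ha \times \hb \dashrightarrow U$ by the formula $\phi(A,B) = (A, B, \pi_c^{-1}(Q_{ab}(A,B)))$, fixing the branch of $\pi_c^{-1}$ by analytic continuation from a generic base point $(A_0, B_0, C_0) \in U$ at which $p$ is a local biholomorphism. Meromorphicity of $\phi$ is inherited from Proposition \ref{bim}, since its second and third components together are precisely $\psi_b(A, B)$. Because $\psi_b$ was constructed as the meromorphic extension of the orbit correspondence originally defined on $U$, the image of $\phi$ lies in the closed analytic set $U$ by analytic continuation.

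Second, to verify that $\phi$ is inverse to $p$, the equality $p \circ \phi = \mathrm{id}_{\ha \times \hb}$ is built into the formula. Conversely, on the open dense subset of $U$ on which $p$ is a submersion (provided by Proposition \ref{comp-set}), the complex reflection law at the third vertex forces $C' = Q_{ab}(A, B)$, so $\phi(A, B) = (A, B, C)$ and $\phi \circ p = \mathrm{id}_U$ on an open dense subset of $U$. This establishes bimeromorphicity. For the image-complement statement, note that whenever $\phi$ is defined at $(A,B)$, one has $(A,B) = p(\phi(A,B)) \in p(U)$; hence $(\ha \times \hb) \setminus p(U)$ is contained in the indeterminacy set of $\phi$, which coincides with that of $Q_{ab}$, since the branch of $\pi_c^{-1}$ once fixed by analytic continuation introduces no further indeterminacies where $Q_{ab}$ is holomorphic. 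Finally, $Q_{ab}$ is a meromorphic map from the $2$-dimensional complex manifold $\ha \times \hb$ to the compact complex manifold $\cp^2$, and the classical fact that indeterminacy loci of such maps have codimension at least two shows that this set is at most discrete.

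The main technical subtlety I anticipate is the consistent choice of branch of $\pi_c^{-1}$ so that $\phi$ lands in the prescribed $3$-reflective component $U$ rather than in some other component of the completed $3$-periodic set; this is precisely what the meromorphic extension of $\psi_b$ furnished by Proposition \ref{bim} resolves, once a base point in $U$ is fixed.
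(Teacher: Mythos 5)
Your proposal is correct and essentially reproduces the paper's intended argument: the corollary is stated without separate proof as an immediate consequence of the proof of Proposition \ref{bim}, in which the meromorphic map $\psi_b(A,B)=(B,\pi_c^{-1}\circ Q_{ab}(A,B))$ is exactly your section $\phi$, serving as the meromorphic inverse of the projection $U\to\ha\times\hb$ and placing the complement of the image inside the indeterminacy locus, which is discrete because indeterminacy sets of meromorphic maps have codimension at least two. The details you supply (landing in $U$ by analytic continuation, $C'=Q_{ab}(A,B)$ via the reflection law on the open dense set where the projection is a local biholomorphism) are the natural ones the paper leaves implicit.
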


\begin{corollary} \label{parab} Let $a$, $b$, $c$ be a 3-reflective analytic billiard in $\cp^2$. 
Then the maximal normalization of each its mirror is 
either parabolic (having universal cover $\cc$), or conformally equivalent to the Riemann sphere.
\end{corollary}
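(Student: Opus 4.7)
The plan is to combine the bimeromorphism of Proposition \ref{bim} with the classification of Riemann surfaces according to the complex dimension of their automorphism groups. Since $\psi_b : \ha \times \hb \to \hb \times \hc$ has the form $(A,B) \mapsto (B, C(A,B))$, its restriction to a generic fiber $\ha \times \{B\}$ is a bimeromorphism onto $\{B\} \times \hc$, hence (bimeromorphic maps between one-dimensional Riemann surfaces being biholomorphisms) a biholomorphism $\phi_B : \ha \to \hc$. The assignment $B \mapsto \phi_B$ is holomorphic, and the same argument applied to the cyclic analogues of $\psi_b$ yields $\ha \cong \hb \cong \hc$ as abstract Riemann surfaces.

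Fixing a generic base point $B_0 \in \hb$, I will study the holomorphic family $\sigma_B := \phi_{B_0}^{-1} \circ \phi_B \in \mathrm{Aut}(\ha)$ and split into two cases. If $B \mapsto \sigma_B$ is non-constant, then $\mathrm{Aut}(\ha)$ contains a positive-dimensional complex-analytic image, so $\mathrm{Aut}(\ha)$ has positive complex dimension; by the classical classification of Riemann surfaces this forces $\ha$ to be one of $\cp^1$, $\cc$, $\cc^*$, or an elliptic curve, each of which has universal cover $\cp^1$ or $\cc$, and the corollary follows. If instead $\sigma_B \equiv \mathrm{id}$, then $C = \phi(A)$ depends only on $A$. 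Applying the complex reflection law at the vertex on $a$ (Proposition \ref{comp-set}), for every $A$ in an open subset of $\ha$ the line $A'B'$ must coincide with the fixed line $L_A$ obtained by reflecting $A'\phi(A)'$ through $T_A a$; but then for the $3$-reflective component $U$ to remain two-dimensional, $B'$ must sweep an open arc of $b$, which forces $b \subset L_A$, i.e., $b$ is a complex line. Since $L_A$ passes through $A'$, this line contains all $A'$, so $a \subset b$, and symmetrically $c \subset b$; hence $a = b = c$ is a single complex line whose maximal normalization is $\cp^1$.

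The delicate point lies in the constant-family case: the constraint $B' \in L_A$ leaves no room for $B'$ to move on $b$ unless $b$ coincides with $L_A$, and I use here the fact (built into Proposition \ref{comp-set} and Corollary \ref{epi}) that the projection of the $3$-reflective component $U$ to $\ha \times \hb$ is essentially surjective onto a two-dimensional open set, so $B$ genuinely varies over an open arc of $\hb$ for each fixed $A$. With this in hand, both alternatives place the maximal normalization of each mirror into the list $\{\cp^1, \cc, \cc^*, \text{elliptic curve}\}$, whose members all have universal cover $\cp^1$ or $\cc$, which is exactly the statement of the corollary.
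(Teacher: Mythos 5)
Your proof is correct and shares the paper's skeleton: the family of conformal isomorphisms $\phi_B\colon\ha\to\hc$ extracted from the bimeromorphism of Proposition \ref{bim}, the identification $\ha\simeq\hb\simeq\hc$, and the dichotomy according to whether the family is constant in $B$, with the non-constant case settled by the classical fact that only $\cp^1$, $\cc$, $\cc^*$ and tori carry a nontrivial holomorphic family of automorphisms. Where you genuinely differ is the constant case: the paper fixes $A$ and $C=\phi(A)$ and applies the reflection law at the \emph{moving} vertex $B$, so that every point $B'$ of $b$ reflects lines through $A'$ into lines through $C'$ --- the focal property --- whence $b$ is a line or a conic and $\hb\simeq\oc$; you instead apply the reflection law at the \emph{fixed} vertex $A$, so that the line $A'B'$ must equal the fixed reflection $L_A$ of $A'C'$ in $T_Aa$, and the moving point $B'$ is confined to the line $L_A$, forcing $b$ to be a line. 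Your variant is more elementary (no appeal to the characterization of conics by the reflection property) and yields a stronger conclusion in that case; both are legitimate uses of Proposition \ref{comp-set}. Two cautions, neither fatal: (i) bimeromorphic maps between non-compact Riemann surfaces need not be biholomorphisms (e.g. $z\mapsto 1/z$ is a bimeromorphic self-map of $\cc$ that is not an automorphism), so your parenthetical justification is too quick --- the correct argument, implicit in the paper as well, is that $\phi_B$ and its inverse extend holomorphically by the maximality of the normalizations $\ha$, $\hc$ (Proposition \ref{ext}); (ii) your closing claim that ``symmetrically'' $c\subset b$ would need the reflection law at $C$ rather than literal symmetry, since constancy was assumed only for the family attached to $\psi_b$ --- but that claim is superfluous, because once $b$ is a line one has $\hb\simeq\cp^1$, and $\ha\simeq\hb\simeq\hc$ already finishes the proof.
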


\begin{proof} A Riemann surface has one of the two above types, if and only if it admits a nontrivial holomorphic family of conformal 
automorphisms. Thus, it suffices to show that the maximal normalization of each mirror has a nontrivial holomorphic family of 
automorphisms, or equivalently, has a nontrivial holomorphic family of conformal isomorphisms onto a given Riemann surface. 
Fix a point $B\in\hb$ such that $B'$ is finite and not marked. For every $A\in\ha$ set 
$\phi_B(A)=\pi_c^{-1}\circ Q_{ab}(A,B)\in\hc$. This yields a family of conformal isomorphisms $\phi_B:\ha\to\hc$ depending holomorphically on $B\in\hb$, 
by bimeromorphicity (Proposition \ref{bim}). In particular, the Riemann surfaces $\ha$ and $\hc$ are conformally equivalent. 
Similarly, $S=\ha\simeq\hb\simeq\hc$. If the family $\phi_B$ is nontrivial (non-constant in $B$), then the Riemann surface $S$ is 
either parabolic, or the Riemann sphere, by the statement from the beginning of the proof. We claim that in the contrary case, 
 when $\phi_B$ is independent on $B$, one has $b\simeq\oc$. Indeed, let $\phi=\phi_B$ be 
 independent on $B$. Fix an arbitrary 
 $A\in\ha$ such that $A'$ is finite; set $C=\phi(A)$. Then {\it for every} $B\in\hb$ the lines $A'B'$ and $B'C'$ are symmetric with respect to the tangent line $T_{B}b$. Hence, $b$ is either a line, or a conic. Thus,  $b\simeq\oc$. This proves the corollary.
\end{proof}

\begin{proposition} \label{is-edge} Let $a$, $b$, $c$ be a 3-reflective analytic billiard in $\cp^2$. Then its mirrors are pairwise distinct: 
 one is not analytic extension of another.
 \end{proposition}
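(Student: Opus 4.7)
The plan is to argue by contradiction. Suppose two of the mirrors coincide as analytic extensions; by cyclic relabelling I may assume $\hat a=\hat b$ with $\pi_a=\pi_b$ (the two other pairings are handled by the same argument applied to a different edge of the triangle). I will produce a one-parameter family of orbits in $U$ whose edge $A'B'$ is isotropic, then contradict the reflection law at the opposite vertex.

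First I would check that $a$ is not a line: otherwise every candidate orbit would satisfy $A'B'=a=T_{A'}a$, violating the transversality in Definition~\ref{deforb}, so the billiard would not be 3-reflective. Hence $\deg a\ge 2$, and after possibly swapping the roles of $I_1$ and $I_2$ (and in the exceptional conic-through-both-isotropic-points case, using a different edge of the triangle), for generic $A\in\hat a$ the isotropic line through $\pi_a(A)$ and $I_1$ meets $a$ in at least one further finite point $B'$. The resulting locus is a $1$-dimensional algebraic curve $L\subset\hat a\times\hat b$ of pairs with $A'B'$ isotropic through $I_1$. By Corollary~\ref{epi} the bimeromorphism $p_{ab}:U\to\hat a\times\hat b$ has at most $0$-dimensional indeterminacy, so $L$ lifts to a $1$-dimensional subfamily $\Sigma\subset U$.

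For a generic orbit of $\Sigma$ the tangent lines $T_Aa$ and $T_Bb=T_Ba$ are both non-isotropic, so Proposition~\ref{is-refl} applied at $A'$ and at $B'$ shows that the reflections of the isotropic line $A'B'$ (through $I_1$) are isotropic lines through $I_2$; but these reflected lines are precisely the edges $A'C'$ and $B'C'$. Since $A'\ne B'$, they are two distinct lines through $I_2$ meeting only there, so $C'=I_2$ on all of $\Sigma$.

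The contradiction now comes from the reflection law at the third vertex. If $I_2\notin\pi_c(\hat c)$ then $\Sigma$ would be empty, which is impossible; hence $I_2\in\pi_c(\hat c)$ and along an irreducible component of $\Sigma$ the value $C=C_0\in\pi_c^{-1}(I_2)$ is constant. The tangent line $T_{C_0}c$ passes through $I_2$ and is therefore isotropic. By Proposition~\ref{comp-set} together with Lemma~\ref{lim-refl}, the reflection law at $C_0$ forces one of the two lines $A'I_2$, $B'I_2$ to coincide with the fixed line $T_{C_0}c$. But along the $1$-dimensional $\Sigma$ the points $A'$ and $B'$ each sweep over the curve $a$, so either equality can hold only on the finite set $T_{C_0}c\cap a$, contradicting $\dim\Sigma=1$. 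The main obstacle in carrying this out is the clean set-up of $\Sigma$: verifying that the isotropic-chord curve $L$ is genuinely $1$-dimensional and is not swallowed by the exceptional locus of $p_{ab}$, and handling the mild case analysis when $a$ passes through one or both isotropic points by swapping $I_1\leftrightarrow I_2$ or by degenerating a different edge.
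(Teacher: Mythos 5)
Your argument is not the paper's: the paper proves Proposition \ref{is-edge} by looking at the curve of \emph{degenerate} triangles $\{A'=B'\}\subset U$ (which exists by Corollary \ref{epi} once $a=b$), applying Corollary \ref{ccoinc} to see that $C'\in T_Aa\cap c$ varies there, and then invoking a proposition from the companion paper \cite[subsection 2.4]{alg}; you instead transplant the isotropic-chord argument that the paper uses \emph{later}, in the proof of Theorem \ref{three}. The transplant is mostly sound and would be self-contained, but it has a genuine gap exactly where this proposition does its real work: the case in which all three mirrors coincide with a conic through \emph{both} isotropic points, e.g.\ the complexified circle. In isotropic coordinates $z=x+iy$, $w=x-iy$ such a conic is $\{zw=1\}$, lines through $I_1$ are $\{z=const\}$ and lines through $I_2$ are $\{w=const\}$, and each of them meets the conic in exactly \emph{one} finite point. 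Hence your chord locus $L$ is empty, swapping $I_1\leftrightarrow I_2$ does not help, and when $c=a=b$ there is no ``different edge'' to fall back on. This case is not vacuous and not trivially excludable: the real circle carries a genuine one-parameter family of triangular orbits, so ruling out $3$-reflectivity of three copies of a circle requires an actual argument (e.g.\ a computation with the parametrization $t\mapsto(t,1/t)$), which your proposal does not supply. This is precisely why the paper establishes Proposition \ref{is-edge} by a different mechanism first, and only afterwards, with pairwise distinctness in hand, runs the Picard/isotropic-chord argument of Theorem \ref{three}.

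Two smaller points. First, the proposition concerns \emph{analytic} billiards, so ``$\deg a\ge2$'' is not available. The correct route is the one the paper uses for Theorem \ref{three}: by Corollary \ref{parab} the normalization $\ha$ is parabolic or rational, an injective holomorphic map from such a surface to $\cp^1$ is forced (by the big Picard theorem) to be M\"obius, and if both isotropic projections of $a$ were injective then $a$ would be a line or a conic through $I_1$ and $I_2$ --- which shows your exceptional case is unavoidable rather than ``mild.'' Second, before applying the reflection law at $A$ and $B$ along $\Sigma$ you should exclude $C'\equiv A'$ (and $C'\equiv B'$) on a component of $\Sigma$: this does follow from Corollary \ref{ccoinc}, since its case (iii) would make the isotropic line $A'B'$ tangent to $a$, i.e.\ $A'$ a marked point, whereas $A'$ is non-constant along $\Sigma$ and marked points are discrete; but it needs to be said. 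That second issue is routine; the missing circle-type case is the real gap.
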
 
 
 \begin{proof} Suppose the contrary, say, $a=b$. Let $U$ be a $k$-reflective component. Then $U$ contains an analytic 
 curve $\Gamma$ consisting of those triples $ABC$ for which $A'=B'$ (Corollary \ref{epi}). Let us fix its irreducible component and 
 denote $\Gamma$ the latter component. Let $A'\equiv B'\not\equiv C'$ on $\Gamma$. Then  
 $C'\in T_{A}a\cap c$ for every $ABC\in\Gamma$ (Corollary \ref{ccoinc}). This implies that $C\not\equiv const$ along the curve 
 $\Gamma$, and hence, it is neither marked, 
 nor double outside a countable subset in $\Gamma$. Thus, the curve $\Gamma$ contains triples $ABC$ such that $C'\neq A'=B'$ and $C'$ 
 is neither marked, nor double point. This 
 contradicts the second proposition in \cite[subsection 2.4]{alg}. In the case, when $A'\equiv B'\equiv C'$ on $\Gamma$, 
 we similarly get a contradiction to the same proposition. This proves Proposition \ref{is-edge}. 
 \end{proof} 
 
 \begin{proof} {\bf of Theorem \ref{three}.} Suppose the contrary: there exists a 3-reflective billiard $a$, $b$, $c$. Let $U$ be a 3-reflective 
 component.  Let us show that  there exists an analytic curve $\Gamma\subset U$ consisting of triples $ABC$ such that 
 $A'\neq B'$, the points $A'$ and $B'$ are finite  and  the line $A'B'$ is isotropic through the point $I_1$.  
 As it is shown below, this curve $\Gamma$ cannot exist.  
 Consider the projections $\ha,\hb\to\cp^1$: the compositions of the parametrizations $\pi_a$, $\pi_b$ with the projection 
 from the isotropic point $I_1$. Each of them is holomorphic 
 and takes all the values, except for at most two, since both maximal normalizations $\ha$, $\hb$ are either parabolic, or 
 conformally equivalent to $\oc$ (Corollary \ref{parab}) and by Picard's Theorem. 
 This together with Proposition \ref{is-edge}  implies that there exists a line 
 through $I_1$ that contains two distinct finite points $A'\in a$ and $B'\in b$. This together with Corollary \ref{epi} implies that 
 the above-defined set $\Gamma$ is non-empty and is an analytic curve. 
 
Note that both $A$ and $B$ are non-constant along the curve $\Gamma$, and for every $ABC\in\Gamma$ such that $A'$ 
and $B'$ are not marked points the lines $A'C'$ and $B'C'$ are isotropic lines through $I_2$. The latter follows from reflection law, 
Proposition \ref{is-refl} and the inclusion $I_1\in A'B'$. This implies that $C'\equiv I_2$ on $\Gamma$. 
Thus, the point $I_2$ is contained in (the maximal analytic extension of the curve) $c$ and by definition, 
the tangent line $T_{I_2}c$  to any branch of the curve $c$ through $I_2$  is isotropic. 
The reflection images $A'I_2$, $B'I_2$ of the line $A'B'$ with respect to the tangent lines $T_{A}a$ and $T_{B}b$ vary, as $ABC$ ranges 
along a component of the curve $\Gamma$, since $A'$ and $B'$ vary and the curves $a$, $b$ are not isotropic lines, see Fig.4. 
On the other hand, at least one of the lines $A'I_2$, $B'I_2$ should coincide with one and the same tangent line  $T_{I_2}c$, 
by Proposition \ref{comp-set} (reflection law), -- a contradiction. The proof of Theorem \ref{three} is complete.
\end{proof}

\begin{figure}[ht]
  \begin{center}
   \epsfig{file=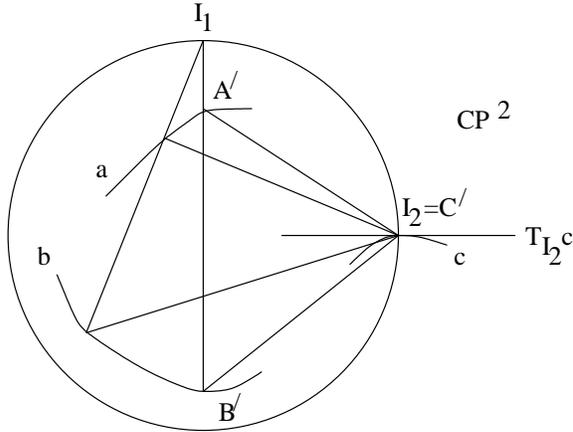}
    \caption{A family of triangular  orbits with isotropic edges $A'B'$}
    \label{fig:4}
  \end{center}
\end{figure}

\section{Acknowledgements} 
I am grateful to Yu.S.Ilyashenko, Yu.G.Kudryashov and A.Yu.Plakhov for attracting my attention to Ivrii's conjecture and invisibility 
and for helpful discussions.

\end{document}